
\documentclass[11pt, a4paper, conference]{IEEEtran} 

\IEEEoverridecommandlockouts                              

\overrideIEEEmargins                                      



\usepackage{graphicx}
\usepackage{subcaption}
\usepackage{amsmath}

\usepackage{amsthm}
\usepackage{dsfont}
\usepackage{xcolor}
\usepackage{upgreek}
\usepackage{enumerate}
\usepackage{amsfonts}
\usepackage{amssymb}
\usepackage{textcomp}
\usepackage{tikz}
\usepackage{tkz-berge}
\usetikzlibrary{calc}
\usetikzlibrary{shapes,fit}
\usepackage{verbatim}
\usepackage{setspace}
\usetikzlibrary{shapes,arrows}

\newtheorem{theorem}{Theorem}[section]
\newtheorem{property}[theorem]{Property}
\newtheorem{lemma}[theorem]{Lemma}
\newtheorem{assumption}[theorem]{Assumption}

\newtheorem{corollary}[theorem]{Corollary}

\newtheorem{example}[theorem]{Example}
\newtheorem{remark}[theorem]{Remark}


\addtolength{\headheight}{14pt}


\definecolor{cof}{RGB}{219,144,71}
\definecolor{pur}{RGB}{186,146,162}
\definecolor{greeo}{RGB}{91,173,69}
\definecolor{greet}{RGB}{52,111,72}

\usepackage[normalem]{ulem} 
\usepackage{soul}
\usepackage{cancel}




\newcommand{\bal}[1] {\ensuremath{\left(\begin{array}{#1}}}
\newcommand{\ear} {\ensuremath{\end{array}\right)}}

\newcommand{\bals}[1] {\ensuremath{\left[\begin{array}{#1}}} 
\newcommand{\ears} {\ensuremath{\end{array} \right] }} 
\newcommand{\co} {\ensuremath{\mathrm{co}}} 

\DeclareMathOperator{\spa}{span}

\DeclareMathOperator{\rank}{rank}

\DeclareMathOperator{\sign}{sign}

\DeclareMathOperator{\diag}{diag}

\newcommand{\funcRdR}{\ensuremath{{f}}}
\newcommand{\funcRdRd}{\ensuremath{{X}}}


\DeclareMathOperator*{\bigtimes}{\raisebox{-0.3ex}{\text{\Large$\times$}}}

\let\leq\leqslant
\let\geq\geqslant
\let\emptyset\varnothing

\newcommand{\calC}{\ensuremath{\mathcal{C}}}
\newcommand{\calD}{\ensuremath{\mathcal{D}}}
\newcommand{\calE}{\ensuremath{\mathcal{E}}}
\newcommand{\calF}{\ensuremath{\mathcal{F}}}
\newcommand{\calG}{\ensuremath{\mathcal{G}}}
\newcommand{\calH}{\ensuremath{\mathcal{H}}}
\newcommand{\calI}{\ensuremath{\mathcal{I}}}

\newcommand{\calK}{\ensuremath{\mathcal{K}}}
\newcommand{\calL}{\ensuremath{\mathcal{L}}}

\newcommand{\calN}{\ensuremath{\mathcal{N}}}

\newcommand{\calR}{\ensuremath{\mathcal{R}}}

\newcommand{\calV}{\ensuremath{\mathcal{V}}}


\newcommand{\bmat}{\begin{matrix}}
\newcommand{\emat}{\end{matrix}}
\newcommand{\bbm}{\begin{bmatrix}}
\newcommand{\ebm}{\end{bmatrix}}
\newcommand{\bpm}{\begin{pmatrix}}
\newcommand{\epm}{\end{pmatrix}}
\newcommand{\bse}{\begin{subequations}}
\newcommand{\ese}{\end{subequations}}
\newcommand{\beq}{\begin{equation}}
\newcommand{\eeq}{\end{equation}}
\newcommand{\ben}{\begin{enumerate}}
\newcommand{\een}{\end{enumerate}}
\newcommand{\beni}{\renewcommand{\labelenumi}{\roman{enumi}.}
\renewcommand{\theenumi}{\roman{enumi}}\begin{enumerate}}
\newcommand{\eeni}{\end{enumerate}\renewcommand{\labelenumi}{\arabic{enumi}.}
\renewcommand{\theenumi}{\arabic{enumi}}}
\newcommand{\bena}{\renewcommand{\labelenumi}{\alpha{enumi}.}
\renewcommand{\theenumi}{\alpha{enumi}}\begin{enumerate}}
\newcommand{\eena}{\end{enumerate}\renewcommand{\labelenumi}{\arabic{enumi}.}
\renewcommand{\theenumi}{\arabic{enumi}}}
\newcommand{\bit}{\begin{itemize}}
\newcommand{\eit}{\end{itemize}}


\newcommand{\R}{\ensuremath{\mathbb R}}

\newcommand{\Z}{\ensuremath{\mathbb Z}}

\newcounter{foochapter} 

\newcommand{\addchapterfront}{
\setcounter{foochapter}{\the\value{chapter}}
\stepcounter{foochapter}
\cleardoublepage
\thispagestyle{empty}
\vspace*{40mm}
\begin{flushright}
\fontfamily{ugm}\textrm{\huge Chapter~}
\color{RoyalRed}\fontfamily{ugm}\fontsize{70}{70}\selectfont\the\value{foochapter} \\
\vspace{2mm}
\color{black}\hrule
\vspace{2mm}
\includegraphics[height = 20.3mm]{./front/bw_logo_rug}
\end{flushright}
\renewcommand{\thepage}{[\arabic{page}]}
\cleardoublepage
\addtocounter{page}{-2}
\renewcommand{\thepage}{\arabic{page}}
}


%

\title{\LARGE \bf
Nonlinear consensus protocols with applications to quantized systems}

\author{Jieqiang Wei, Xinlei Yi, Henrik Sandberg and Karl Henrik Johansson 
\thanks{
 J. Q. Wei, X. L. Yi, H. Sandberg and K. H. Johanson are with the ACCESS Linnaeus Centre, Electrical Engineering, KTH Royal Institute of Technology, 100 44, Stockholm, Sweden,
        {\tt\small xinleiy@kth.se, jieqiang.wei@gmail.com, hsan@ee.kth.se, kallej@kth.se}.
}}

\begin{document}

\maketitle
\thispagestyle{plain}
\pagestyle{plain}

\begin{abstract}\label{s:Abstract}
Two types of general nonlinear consensus protocols are considered in this paper, namely the systems with nonlinear measurement and communication of the agents' states, respectively.  The solutions of the systems are understood in the sense of Filippov to handle the possible discontinuity of the nonlinear functions. For each case, we prove the asymptotic stability of the systems defined on both directed and undirected graphs. Then we reinterpret the results about the general models for a specific type of systems, i.e.,  the quantized consensus protocols, which extend some existing results (e.g., \cite{Ceragioli2011}, \cite{Guo2013}) from undirected graphs to directed ones.
\end{abstract}

\section{Introduction}\label{s:Introduction}

Apart from the popular linear consensus protocols, nonlinear agreement protocols have recently attracted the attention of many researchers. As a special type of nonlinear consensus protocols, quantized consensus protocols have been studied from different viewpoints. In fact, quantization can be due to digital communication, to coarse sensing capabilities, and/or to limited precision in computation.

Some related works about the quantized systems are as follows. Generally speaking, there are two major divisions about the quantized systems. The first one is that the measurement of the states is quantized, see e.g., \cite{Ceragioli2011}, \cite{Frasca2012}, \cite{Chen2013}, \cite{Ceragioli2015}. In particular, the results in \cite{Ceragioli2011} and \cite{Frasca2012} are the most related to the current paper, where the authors considered the consensus protocols with quantized states measurement on \emph{undirected} graphs.
The other one is that the communications among the agents are quantized, see e.g., \cite{Guo2013}, \cite{Dimos2010} and \cite{Kashyap2007}. In \cite{Dimos2010}, the authors considered quantized communication protocols within the framework of hybrid dynamical systems. In \cite{Guo2013}, the authors considered the communication quantized system using the notions of Filippov solutions for \emph{undirected} graphs. In \cite{YiCDC2016}, the authors considered both divisions and proposed self-triggered rules to avoid continuous communications between agents.

Another major motivation of this paper is \cite{Wei2015} where the authors considered several nonlinear consensus protocols with the fundamental assumptions of the nonlinear functions being sign-preserving, i.e., the function takes strictly positive values for positive variables and vice versa. However this property is not satisfied by some quantizers. This motivates us to consider a framework of nonlinear consensus protocols without sign-preserving but only with monotone assumption of the nonlinear functions.

The contributions of this paper are twofolds. First, we present the stability of two general nonlinear consensus protocols, namely the protocols with nonlinear measurement and communication of the states, for all of the Filippov solutions. In these models, one fundamental assumption is the monotonicity of these nonlinear functions. In addition, some extra conditions are needed in order to guarantee the boundedness of all the Filippov trajectories. Second, we reinterpret the results about general systems to a special case, i.e., quantized consensus protocols, which serves as an extension of the results in \cite{Ceragioli2011}, \cite{Guo2013} from undirected graphs to directed ones.

The structure of the paper is as follows. In Section \ref{s:Preliminaries}, we introduce some terminologies, notations and lemmas. 
In Section \ref{s:measure}, we consider the nonlinear consensus protocols where the measurement of the state of the agents are effected by some nonlinearities. 
Section \ref{s:communication} is devoted to the case when the communication among the agents are imprecise. 
In Section \ref{s:quantize} we reinterpret the results in Section \ref{s:measure} and \ref{s:communication} for the quantized consensus protocols.
Finally, the conclusion follows.


\section{Preliminaries}\label{s:Preliminaries}
In this section we briefly review some notions from graph
theory, and give some definitions, notations and properties regarding Filippov solutions.

Let $\mathcal{G}=(\mathcal{V},\mathcal{E},A)$ be a weighted digraph with
node set $\mathcal{V}=\{v_1,\ldots,v_n\}$,
edge set $\mathcal{E}\subseteq\mathcal{V}\times\mathcal{V}$ and
weighted adjacency matrix $A=[a_{ij}]$ with nonnegative adjacency elements $a_{ij}$.
An edge of $\mathcal{G}$ is denoted by $e_{ij}=(v_i,v_j)$ and we write $\calI=\{1,2,\ldots,n\}$.
The adjacency elements $a_{ij}$ are associated with the edges of the graph in
the following way: $a_{ij}>0$  if and only if  $e_{ji} \in \mathcal{E}$.
Moreover,  $a_{ii}=0$ for all $i \in\calI$.
For undirected graphs,  $A=A^T$.

The set of neighbors of node $v_i$ is
denoted by $N_i=\{v_j\in\mathcal{V}:(v_j,v_i)\in\mathcal{E}\}$.
For each node $v_i$, its in-degree
is defined as
\begin{align*}
\deg_{\rm in} (v_i) = \sum_{j=1}^n a_{ij},  
\end{align*}
The degree
matrix of the digraph $\mathcal{G}$ is a diagonal matrix $\Delta$
where $\Delta_{ii}=\deg_{\rm  in}(v_i)$. The \emph{graph Laplacian} is
defined as
\begin{equation*}
L=\Delta-A.
\end{equation*}
This implies $L\mathds{1}_n =0_n$, where $\mathds{1}_n$ is the $n$-vector containing only ones and $0_n$ is the $n$-vector containing only zeros.

A directed path from node $v_i$ to node $v_j$ is a chain of edges from $\calE$
such that the first edge starts from $v_i$, the last edge ends at  $v_j$ and
every edge starts where the previous edge ends.
A graph is called \emph{strongly connected} if for every two nodes $v_i$ and
$v_j$ there is a directed path from $v_i$ to $v_j$.
A subgraph $\calG' = (\calV',\calE',A')$ of $\calG$ is called a \emph{directed
	spanning tree} for $\calG$ if $\calV' =\calV $, $\calE' \subseteq \calE$, and for every node $v_i\in
\calV'$ there is exactly one $v_j$ such that $e_{ji}\in \calE'$, except for one
node, which is called the root of the spanning tree.
Furthermore, we call a node $v\in \calV$ a \emph{root} of $\calG$ if there is a directed
spanning tree for $\calG$ with $v$ as a root.
In other words, if $v$ is a root of $\calG$, then there is a directed path from
$v$ to every other node in the graph. A digraph is {\it a directed ring} if for every node $v_i$, there exists exactly one $v_j$ such that $e_{ij}\in\calE$ and there exists exactly one $v_k$ such that $e_{ki}\in\calE$.

A digraph, with $m$ edges, is completely specified by its \emph{incidence
	matrix} $B$, which is an $n\times m$ matrix, with $(i,j)^{\text{th}}$
element equal to $-1$ if the $j^{\text{th}}$ edge is towards vertex
$i$, and equal to $1$ if the $j^{\text{th}}$ edge is originating from
vertex $i$, and $0$ otherwise.

An important property about strong connected digraph is
\begin{property} [Lemma 2 in \cite{Lu2007}] \label{connected_laplacian}
	The graph Laplacian matrix $L$ of a strongly connected digraph
	$\calG$ satisfies: zero is an algebraically simple
eigenvalue of $L$ and there is a positive vector
$w^{\top}=[w_{1},\cdots,w_{n}]$ such that $w^{\top} L=0$ and
$\sum_{i=1}^{m}w_{i}=1$. Moreover $L^{\top}diag(w)$ is positive semi-definite.
\end{property}

With $\mathbb{R}_-$, $\mathbb{R}_+$ and $\R_{\geqslant 0}$ we denote the sets of
negative, positive and nonnegative real numbers, respectively. The $i$th row and $j$th column of a matrix $M$ are denoted as $M_{i,\cdot}$ and $M_{\cdot,j}$, respectively. And for simplicity, let $M_{\cdot,j}^{\top}$ denotes $(M_{\cdot,j})^\top$.

The vectors $e_1,e_2,\ldots,e_n$ denote the canonical basis of $\R^n$.

In the rest of this section we give some definitions and notations regarding
Filippov solutions (see, e.g., \cite{cortes2008}).

Let $\funcRdRd$ be a map from $\R^n$ to $\R^n$,
and let $2^{\R^n}$ denotes the collection of all subsets of $\R^n$.
We define the \emph{Filippov set-valued map} of $\funcRdRd$, denoted
$\calF[\funcRdRd]:\R^n\rightarrow 2^{\R^n}$,  as
\begin{equation}\label{e:def_filippovset}
\calF[\funcRdRd](x)\triangleq
\bigcap_{\delta>0}\bigcap_{\mu(S)=0}\overline{\mathrm{co}}\{\funcRdRd(B(x,
\delta)\backslash S)\},
\end{equation}
where $B(x,\delta)$ is the open ball centered at $x$ with radius $\delta>0$,
$S$ is a subset of $\R^n$,
$\mu$ denotes the Lebesgue measure and $\overline{\mathrm{co}}$ denotes the convex closure.
If $X$ is continuous at $x$, then $ \calF[\funcRdRd](x)$ contains only the point $X(x)$.
Moreover, there are some useful properties about the Filippov set-valued map.

\begin{property}[Calculus for $\calF$ \cite{paden1987}]\label{p:calculus for Filippov}
	\begin{enumerate}[(i)]
		\item Assume that $f:\R^m\rightarrow\R^n$ is locally bounded. Then $\exists N_f\subset \R^m, \mu(N_f)=0$ such that $\forall N\subset\R^m, \mu(N)=0$,
		\begin{equation}
		\calF[f](x)=\mathrm{co}\{\lim_{i\rightarrow\infty} f(x_i)\mid x_i\rightarrow x, x_i\notin N_f\cup N \}.
		\end{equation}
		\item Assume that $f_j:\R^m\rightarrow \R^{n_j}, j=1,\ldots,N$ are locally bounded, then
		\begin{equation}
		\calF\big[ \bigtimes_{j=1}^N f_j \big](x) \subset \bigtimes_{j=1}^N\calF[f_j](x).\footnote{Cartesian product notation and column vector notation are used interchangeably.}
		\end{equation}
		\item Let $g:\R^m\rightarrow\R^n$ be $C^1$, $\rank Dg(x)=n$, where $Dg(x)$ is the Jacobian matrix, and $f:\R^n\rightarrow\R^p$ be locally bounded; then
		\begin{equation}
		\calF[f\circ g](x)=\calF[f](g(x)).
		\end{equation}
		\item Let $g:\R^m\rightarrow\R^{p\times n}$ (i.e. matrix valued) be $C^0$ and $f:\R^m\rightarrow\R^n$ be locally bounded; then
		\begin{equation}
		\calF[gf](x)=g(x)\calF[f](x)
		\end{equation}
		where $gf(x):=g(x)f(x)\in\R^p$.
	\end{enumerate}
\end{property}

\begin{property}\label{p:fili_monotone}
	For an increasing function $\varphi:\R\rightarrow\R$, the Filippov set-valued map satisfies that
	\begin{enumerate}[(i)]
		\item  $\calF[\varphi](x)=[\varphi(x^-),\varphi(x^+)]$ where $\varphi(x^-),\varphi(x^+)$ are the left and right limit of $\varphi$ at $x$ respectively;
		\item  for any $x_1<x_2$, and $\nu_i\in\calF[\varphi](x_i), i=1,2,$ we have $\nu_1\leq\nu_2$.
	\end{enumerate}
\end{property}

\begin{proof}
	This can be seen as a straightforward deduction from Property \ref{p:calculus for Filippov} (1) and the definition of increasing functions.
\end{proof}

By using the fact that monotone functions are continuous almost everywhere, and the definition of right and left limits, we have following property.

\begin{property}\label{p:monotone}
	For an increasing function $\varphi:\R\rightarrow\R$,
	\begin{enumerate}[(i)]
		\item  $\calF[\varphi](x)=\{\varphi(x) \}$ for almost all $x$;
		\item  the right (left) limit, i.e., $\varphi(x^+)$ ($\varphi(x^-)$) is right (left) continuous for all $x$.
	\end{enumerate}
\end{property}

A \emph{Filippov solution} of the differential equation $\dot{x}(t)=\funcRdRd(x(t))$ on $[0,t_1]\subset\R$ is
an absolutely continuous function $x:[0,t_1]\rightarrow\R^n$ that
satisfies the differential inclusion
\begin{equation}\label{e:differential_inclusion}
\dot{x}(t)\in \calF[\funcRdRd](x(t))
\end{equation}
for almost all $t\in[0,t_1]$.
A Filippov solution $t\mapsto x(t)$ is \emph{maximal} if it cannot be extended forward in time, that is, if $t\rightarrow x(t)$ is not the result of the truncation of another solution with a larger interval of definition. Since the Filippov solutions of a discontinuous system \eqref{e:differential_inclusion} are not necessarily unique, we need to specify two types of invariant set. A set $\calR\subset\R^n$ is called \emph{weakly invariant} for \eqref{e:differential_inclusion} if, for each $x_0\in \calR$, at least one maximal solution of \eqref{e:differential_inclusion} with initial condition $x_0$ is contained in $\calR$. Similarly, $\calR\subset \R^n$ is called \emph{strongly invariant} for \eqref{e:differential_inclusion} if, for each $x_0\in \calR$, every maximal solution of \eqref{e:differential_inclusion} with initial condition $x_0$ is contained in $\calR$. For more details, see \cite{cortes2008,filippov1988}.

Let $\funcRdR$ be a map from $\R^n$ to $\R$. The right directional derivative
of $\funcRdR$ at $x$ in the direction of $v\in \R^n$
is defined as
\begin{equation*}
\funcRdR'(x;v)=\lim_{h\rightarrow 0^+} \frac{\funcRdR(x+hv)-\funcRdR(x)}{h},
\end{equation*}
when this limit exists. The generalized derivative of $\funcRdR$ at $x$ in the
direction of $v\in \R^n$ is given by
\begin{equation*}
\begin{aligned}
\funcRdR^o(x;v) & =\limsup_{\begin{subarray}{c}
	y\rightarrow x  \\
	h\rightarrow 0^+
	\end{subarray}}
\frac{\funcRdR(y+hv)-\funcRdR(y)}{h} \\
& = \lim_{\begin{subarray}{c}
	\delta \rightarrow 0^+ \\
	\epsilon \rightarrow 0^+
	\end{subarray}}
\sup_{\begin{subarray}{c}
	y\in B(x,\delta)\\
	h\in[0,\epsilon)
	\end{subarray}}
\frac{\funcRdR(y+hv)-\funcRdR(y)}{h}.
\end{aligned}
\end{equation*}
We call the function $\funcRdR$ \emph{regular} at $x$ if $ \funcRdR'(x;v)$ and
$\funcRdR^o(x;v)$ are equal for all $v \in \R^n$. For example, convex function is regular (see e.g.,\cite{Clarke1990optimization}).

If $\funcRdR : \R^n \rightarrow \R$ is locally Lipschitz, then its {\it generalized gradient}
$\partial
\funcRdR:\R^n\rightarrow 2^{\R^n}$ is defined by
\begin{equation}
\partial \funcRdR(x):=\mathrm{co}\{\lim_{i\rightarrow\infty} \nabla
\funcRdR(x_i):x_i\rightarrow x, x_i\notin S\cup \Omega_{\funcRdR} \},
\end{equation}
where $\nabla$ denotes the gradient operator, $\Omega_{\funcRdR} \subset\R^n$ denotes the set of points where
$\funcRdR$ fails to
be differentiable and $S\subset\R^n$ is a set of Lebesgue measure zero that can be
arbitrarily
chosen to simplify the computation. The resulting set $\partial \funcRdR(x)$ is independent of the choice of $S$ \cite{Clarke1990optimization}.

Given a  set-valued map $\calF:\R^n\rightarrow
2^{\R^n}$, the \emph{set-valued Lie derivative}
$\tilde{\mathcal{L}}_{\calF}\funcRdR:\R^n\rightarrow 2^{\R}$
of a locally Lipschitz function $\funcRdR:\R^n\rightarrow \R$  with respect to
$\calF$ at $x$ is
defined as
\begin{equation}\label{e:set-valuedLie}
\begin{aligned}
\tilde{\mathcal{L}}_{\calF}\funcRdR(x) := & \{a\in\R  \mid \textnormal{there
	exists } \nu\in\calF(x) \textnormal{ such that } \\
& \zeta^T\nu=a
\textnormal{
	for all } \zeta\in \partial \funcRdR(x)\}.
\end{aligned}
\end{equation}
If $\calF$ takes convex and compact values, then for each $x$, $\tilde{\mathcal{L}}_{\calF}\funcRdR(x)$ is closed and bounded interval in $\R$, possibly empty.

The following result is a generalization of LaSalle's invariance principle for discontinuous differential equations \eqref{e:differential_inclusion} with non-smooth
Lyapunov functions.
\begin{theorem}[LaSalle Invariance Principle \cite{cortes2008}]\label{chap_preli:thm_stability}
Let $\funcRdR:\R^n\rightarrow\R$ be a locally Lipschitz and regular function. Let $S\subset \R^n$ be compact and strongly invariant for \eqref{e:differential_inclusion}, and assume that $\max \tilde{\calL}_{\calF[\funcRdRd]} f(y)\leq 0$ for each $y\in S$, where we define $\max\emptyset=-\infty$. Then, all solutions $x:[0,\infty)\rightarrow \R^n$ of \eqref{e:differential_inclusion} starting at $S$ converge to the largest weakly invariant set $M$ contained in
\begin{equation}
S\cap\overline{\{y\in\R^n\mid
	0\in\tilde{\calL}_{\mathcal{F}[\funcRdRd]}f(y)\}}.
\end{equation}
Moreover, if the set $M$ consists of a finite number of points, then the limit of each solution starting in $S$ exists and is an element of $M$.
\end{theorem}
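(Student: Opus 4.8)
The plan is to prove this nonsmooth LaSalle principle along the lines of the classical smooth argument, with the ordinary derivative of the Lyapunov function replaced by the set-valued Lie derivative and with a chain rule valid along Filippov solutions. The two classical ingredients I would quote are: (a) the chain rule of Bacciotti and Ceragioli (see \cite{cortes2008}), which says that if $f$ is locally Lipschitz and regular then along any Filippov solution $x:[0,\infty)\to\R^n$ of \eqref{e:differential_inclusion} the map $t\mapsto f(x(t))$ is differentiable for a.e.\ $t$ and $\tfrac{d}{dt}f(x(t))\in\tilde{\calL}_{\calF[X]}f(x(t))$ for a.e.\ $t$; and (b) the standard topology of positive limit sets of precompact solutions of differential inclusions whose right-hand side $\calF[X]$ is upper semicontinuous with nonempty, compact, convex values.

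First I would use strong invariance and compactness of $S$: any maximal solution $x(\cdot)$ starting in $S$ stays in $S$, and being confined to a compact set it cannot escape in finite time, so it is defined on all of $[0,\infty)$. Then, by (a), at a.e.\ $t$ the set $\tilde{\calL}_{\calF[X]}f(x(t))$ is nonempty and $\tfrac{d}{dt}f(x(t))\le\max\tilde{\calL}_{\calF[X]}f(x(t))\le 0$ by hypothesis (the convention $\max\emptyset=-\infty$ making the hypothesis vacuous at points where the Lie derivative is empty). Hence $t\mapsto f(x(t))$ is nonincreasing; it is bounded below since $f$ is continuous on the compact set $S$, so $f(x(t))\to c$ for some $c\in\R$.

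Next I would analyse the positive limit set $L^+$ of $x(\cdot)$. By (b), $L^+$ is nonempty, compact, connected, contained in $S$, weakly invariant, and $x(t)$ approaches $L^+$ as $t\to\infty$. Continuity of $f$ gives $f\equiv c$ on $L^+$. Take any $y_0\in L^+$: by weak invariance there is a Filippov solution $y(\cdot)$ of \eqref{e:differential_inclusion} with $y(0)=y_0$ that remains in $L^+$; then $f(y(t))\equiv c$, so $\tfrac{d}{dt}f(y(t))=0$ a.e., and by (a) $0\in\tilde{\calL}_{\calF[X]}f(y(t))$ for a.e.\ $t$. Letting $t\downarrow 0$ along such times and using continuity of $y$ yields $y_0\in\overline{\{z\mid 0\in\tilde{\calL}_{\calF[X]}f(z)\}}$. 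Since $y_0\in L^+$ was arbitrary and $L^+\subset S$, we obtain $L^+\subset S\cap\overline{\{y\in\R^n\mid 0\in\tilde{\calL}_{\calF[X]}f(y)\}}$. As $L^+$ is itself weakly invariant, it is contained in the largest weakly invariant set $M$ inside this set, and since $x(t)\to L^+$ we conclude $x(t)\to M$. For the final assertion, if $M$ is a finite set of points then the connected set $L^+\subset M$ is a single point $p$, and $x(t)\to p$.

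The hard part, I expect, will be the claim used in the third paragraph that $L^+$ is weakly invariant. Unlike the single-valued ODE case, this requires a closedness property of the set of Filippov solutions — uniform limits on compact time intervals of solutions, allowing time translation of the initial instant, are again solutions — which in turn leans on upper semicontinuity and compact-convex-valuedness of $\calF[X]$ together with precompactness of the trajectory. The chain rule (a) is the other load-bearing tool; both are classical facts from \cite{cortes2008} and \cite{filippov1988} that I would cite rather than re-derive.
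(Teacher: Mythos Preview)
The paper does not prove this theorem: it is stated in the preliminaries as a known result quoted from \cite{cortes2008}, with no accompanying proof. There is therefore nothing in the paper to compare your argument against.

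That said, your sketch is the standard one and is essentially correct. A couple of small points worth tightening if you ever write it out in full. First, in the third paragraph you only obtain $0\in\tilde{\calL}_{\calF[X]}f(y(t))$ for almost every $t$ along the weakly invariant solution through $y_0$; to conclude that $y_0$ itself lies in the closure of $\{z\mid 0\in\tilde{\calL}_{\calF[X]}f(z)\}$ you take a sequence of such times $t_k\downarrow 0$, which is fine since a full-measure set is dense. Second, weak invariance of $L^+$ (and indeed the existence of global solutions in $S$) relies on $\calF[X]$ being upper semicontinuous with nonempty compact convex values and locally bounded; these are automatic for a Filippov set-valued map of a locally bounded vector field, but strictly speaking the theorem as stated in the paper does not impose local boundedness on $X$, so you may want to flag that as an implicit standing assumption (it is standard in \cite{cortes2008,filippov1988}).
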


At the end of this section, we list two potential Lyapunov functions.
\begin{lemma}[Prop. 2.2.6, Ex. 2.2.8, and Prop. 2.3.6 in \cite{Clarke1990optimization}]\label{regular_Lipschitz_lyapunov}
	The following functions are regular and Lipschitz continuous,
	\begin{equation}\label{e:VandW}
	V(x):= \max_{i\in\calI} x_i, \qquad W(x):= -\min_{i\in\calI} x_i.
	\end{equation}
\end{lemma}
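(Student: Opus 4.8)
The plan is to deduce both assertions from the single structural observation that $V$ and $W$ are pointwise maxima of finitely many affine functions, namely $V(x)=\max_{i\in\calI} x_i$ and $W(x)=-\min_{i\in\calI}x_i=\max_{i\in\calI}(-x_i)$, together with two standard facts about such functions: a finite maximum of $1$-Lipschitz functions is $1$-Lipschitz, and a finite-valued convex function on $\R^n$ is locally Lipschitz and regular (the regularity of convex functions being exactly the fact recalled just after the definition of a regular function above, and also contained in \cite{Clarke1990optimization}).

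For Lipschitz continuity I would argue directly. Given $x,y\in\R^n$, choose $k\in\calI$ with $V(x)=x_k$; then $V(x)-V(y)\leq x_k-y_k\leq\norm{x-y}_\infty$, and exchanging the roles of $x$ and $y$ yields $\abs{V(x)-V(y)}\leq\norm{x-y}_\infty$. Since $W$ has the same form with $-x_i$ in place of $x_i$, the identical estimate gives $\abs{W(x)-W(y)}\leq\norm{x-y}_\infty$. Hence $V$ and $W$ are globally $1$-Lipschitz in the $\ell^\infty$-norm, in particular locally Lipschitz, so that the generalized gradient and the set-valued Lie derivative introduced above are well defined for them.

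For regularity I would first note that each coordinate function $x\mapsto x_i$ (resp. $x\mapsto -x_i$) is affine, hence convex, so their finite pointwise maxima $V$ and $W$ are convex on all of $\R^n$; regularity then follows immediately from the regularity of convex functions, i.e. from \cite[Prop.~2.3.6]{Clarke1990optimization}. If a self-contained argument is preferred, I would compute the relevant derivatives explicitly: fixing $x$ and letting $J(x)=\{i\in\calI:x_i=V(x)\}$ be the set of active indices, monotonicity of the difference quotient $h\mapsto\bigl(V(x+hv)-V(x)\bigr)/h$ in $h>0$ shows that $V'(x;v)$ exists for every $v$ and equals $\max_{i\in J(x)}v_i$. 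For the generalized derivative one uses that the active-index map is upper semicontinuous: since $x_i<V(x)$ for $i\notin J(x)$ is a strict inequality, it persists on a neighbourhood, so there is $\delta>0$ with $J(y)\subseteq J(x)$ for all $y\in B(x,\delta)$; then $V(y+hv)-V(y)\leq h\max_{i\in J(x)}v_i$ for such $y$ and small $h>0$, and passing to the $\limsup$ gives $V^o(x;v)\leq\max_{i\in J(x)}v_i=V'(x;v)$. Together with the general inequality $V^o(x;v)\geq V'(x;v)$ valid for any locally Lipschitz function, this proves $V^o(x;v)=V'(x;v)$ for all $v$, i.e. regularity of $V$; the argument for $W$ is verbatim the same with $-x_i$ in place of $x_i$.

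The only step requiring any care is the regularity claim, and within it the inequality $V^o(x;v)\leq V'(x;v)$; everything else is immediate from convexity or the Lipschitz estimate. I expect the small but essential point to be the upper semicontinuity of the active-index set $J(\cdot)$ — the observation that indices which are not active at $x$ stay inactive on a neighbourhood of $x$, with the required smallness of $h$ being uniform over that neighbourhood — since this is precisely what controls the $\limsup$ defining $V^o(x;v)$, whereas the reverse inequality and the existence of the one-sided directional derivative are automatic.
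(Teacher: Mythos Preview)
Your proof is correct. The paper itself does not supply a proof of this lemma: it simply cites \cite[Prop.~2.2.6, Ex.~2.2.8, Prop.~2.3.6]{Clarke1990optimization} and moves on, so there is no argument to compare against beyond those references. Your first route---observing that $V$ and $W$ are finite maxima of affine functions, hence convex, hence regular by \cite[Prop.~2.3.6]{Clarke1990optimization}---is exactly what those citations encode, and your direct Lipschitz estimate is the content of \cite[Prop.~2.2.6, Ex.~2.2.8]{Clarke1990optimization}. The additional self-contained computation of $V'(x;v)=\max_{i\in J(x)}v_i$ and the matching bound on $V^o(x;v)$ via upper semicontinuity of the active-index set is a nice bonus that the paper does not attempt; it is sound, and the one inequality you flag as the delicate point, $V^o(x;v)\geq V'(x;v)$, is indeed automatic once $V'(x;v)$ is known to exist, since restricting the $\limsup$ defining $V^o$ to the sequence $y=x$ already dominates the directional limit.
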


\section{Systems with Nonlinear Measurement}\label{s:measure}
In this section we consider a network of $n$ agents with a communication topology given by a weighted directed graph $\calG=(
\calV,\calE,A)$. In this network, agent $i$ receives information from agent $j$ if and only if
there is an edge from node $v_j$ to node $v_i$ in the graph $\calG$. Unlike the linear consensus protocol where the agents can communicate with their real states, here we propose one strategy that only a nonlinear version of the states are available to the agents. More precisely, we consider the following nonlinear consensus protocol
\begin{equation}\label{e:nonlinear1}
\dot{x} = -Lf(x)
\end{equation}
where $f(x)=[f_1(x_1),\ldots,f_n(x_n)]^T$ and $f_i:\R\rightarrow\R$.
Throughout this paper, we assume the following.

\begin{assumption}\label{as:monotone}
	The function $f_i$ is an increasing function and satisfies that $\lim_{x_i\rightarrow\infty}|f_i(x_i)|=\infty$.
\end{assumption}
Note here we do \emph{not} assume any continuity of the function $f_i$, examples include sign function, quantizations etc. In order to handle the possible discontinuities, we understand the solution of \eqref{e:nonlinear1} in the Filippov sense, i.e., we consider the differential inclusion
\begin{equation}
\dot{x} \in -L\calF[f](x).
\end{equation}
By Property \ref{p:calculus for Filippov}, the previous dynamical inclusion satisfies
\begin{equation}\label{e:nonlinear1_fili}
\begin{aligned}
\dot{x} \in -L \bigtimes_{i=1}^n\calF[f_i](x_i)
 := \calK_1(x).
\end{aligned}
\end{equation}

Denote
\begin{equation}\label{e:D}\calD_1=\{ x\in\R^n \mid \exists a\in\R \textnormal{ s.t. } a\mathds{1}_n\in \bigtimes_{i=1}^n\calF[f_i](x_i)\}.
\end{equation}

\begin{property}
	For the function $f_i$ satisfies Assumption \ref{as:monotone}, the set $\calD_1$ is closed.
\end{property}

\begin{proof}
	Take any sequence $\{y^k\}\subset\R ^n$ satisfying $\lim_{k\rightarrow\infty}y^k=x$ and $y^k\in\calD_1, k=1,2,\ldots$, we shall show that $x\in\calD_1$. Without loss of generality, we can assume the sequence satisfies that $y^k_i$ converge to $x_i$ from one side, i.e., $y^k_i<x_i$ or $y^k_i>x_i$.
	
	Note that $y^k\in\calD_1$ implies that $\cap_{i=1}^n\calF[f_i](y^k_i)\neq \emptyset.$ For the case $y^k_i>x_i$, we have $f_i(y^{k-}_i)\geq f_i(x^-_i)$, $f_i(y^{k+}_i)\geq f_i(x^+_i)$ and $\lim_{k\rightarrow\infty}f_i(y^{k+}_i)= f_i(x^+_i)$ which is based on Property \ref{p:monotone} (ii). Hence we have $[\lim_{k\rightarrow\infty}f_i(y^{k-}_i),\lim_{k\rightarrow\infty}f_i(y^{k+}_i)]\subset [f_i(x^-_i),f_i(x^+_i)]$. Similarly, for the case $y^k_i<x_i$, we also can get that result. Then $\cap_{i=1}^n\calF[f_i](x_i)\neq \emptyset$, i.e., $x\in\calD_1$.
\end{proof}

\begin{theorem}\label{th:SC_nonlinear1}
	Suppose the underlying topology $\calG$ is directed and strongly connected, then all the Filippov solutions of \eqref{e:nonlinear1_fili} converge in to $\calD_1$ asymptotically.
\end{theorem}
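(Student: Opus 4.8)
The strategy is to feed the non-smooth LaSalle invariance principle (Theorem~\ref{chap_preli:thm_stability}) a Lyapunov function assembled from the potentials of the $f_i$, weighted by the positive vector $w$ supplied by Property~\ref{connected_laplacian}. Set $F_i(s):=\int_0^s f_i(\tau)\,d\tau$ and $\Phi(x):=\sum_{i=1}^{n} w_i F_i(x_i)$. Since each $f_i$ is increasing, each $F_i$ is convex, so $\Phi$ is convex, hence locally Lipschitz and regular (cf.\ the remarks after Theorem~\ref{chap_preli:thm_stability}). A convex $F_i$ with one-sided derivatives $f_i(x_i^-),f_i(x_i^+)$ has generalized gradient $[f_i(x_i^-),f_i(x_i^+)]$, which equals $\calF[f_i](x_i)$ by Property~\ref{p:fili_monotone}; since $\Phi$ is a separable sum of regular functions, $\partial\Phi(x)=\diag(w)\,\bigtimes_{i=1}^{n}\calF[f_i](x_i)$. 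Finally, under Assumption~\ref{as:monotone} each $F_i$, and hence $\Phi$, is radially unbounded, so every sublevel set $S_c:=\{x\mid \Phi(x)\leq c\}$ is compact.

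First I would show $\max\tilde{\calL}_{\calK_1}\Phi(x)\leq 0$ for every $x$. Let $a\in\tilde{\calL}_{\calK_1}\Phi(x)$, witnessed by some $\nu\in\calK_1(x)$, and pick $\eta$ with $\eta_i\in\calF[f_i](x_i)$ and $\nu=-L\eta$. By the definition \eqref{e:set-valuedLie}, $\zeta^\top\nu=a$ for \emph{every} $\zeta\in\partial\Phi(x)$, and in particular for $\zeta=\diag(w)\eta$, which yields $a=-\eta^\top\diag(w)L\eta=-\tfrac{1}{2}\eta^\top(\diag(w)L+L^\top\diag(w))\eta\leq 0$ because $L^\top\diag(w)$ is positive semi-definite (Property~\ref{connected_laplacian}). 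With the convention $\max\emptyset=-\infty$ this covers also the points with $\tilde{\calL}_{\calK_1}\Phi(x)=\emptyset$. Hence $t\mapsto\Phi(x(t))$ is non-increasing along every Filippov solution (chain rule for regular functions along solutions of the inclusion, see \cite{cortes2008}), so each $S_c$ is strongly invariant; in particular every maximal solution stays in the compact set $S_{\Phi(x(0))}$ and is therefore complete.

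Next I would locate the set where $0\in\tilde{\calL}_{\calK_1}\Phi$. If the computation above returns $a=0$, then $\eta^\top(\diag(w)L+L^\top\diag(w))\eta=0$. The matrix $\diag(w)L$ has zero row and column sums and non-positive off-diagonal entries, so its symmetric part is the Laplacian of the undirected graph underlying $\calG$, which is connected because $\calG$ is strongly connected; being positive semi-definite with kernel exactly $\spa\{\mathds{1}_n\}$, the vanishing of the quadratic form forces $\eta=c\,\mathds{1}_n$ for some $c\in\R$. Since $\eta\in\bigtimes_{i=1}^{n}\calF[f_i](x_i)$, this means $c\,\mathds{1}_n\in\bigtimes_{i=1}^{n}\calF[f_i](x_i)$, i.e.\ $x\in\calD_1$. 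Thus $\{x\mid 0\in\tilde{\calL}_{\calK_1}\Phi(x)\}\subseteq\calD_1$, and since $\calD_1$ is closed, the closure of this set is still contained in $\calD_1$.

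To finish, fix a Filippov solution with $x(0)=x_0$ and invoke Theorem~\ref{chap_preli:thm_stability} with $S=S_{\Phi(x_0)}$: the solution converges to the largest weakly invariant set $M$ contained in $S_{\Phi(x_0)}\cap\overline{\{x\mid 0\in\tilde{\calL}_{\calK_1}\Phi(x)\}}\subseteq\calD_1$, whence $\mathrm{dist}(x(t),\calD_1)\to 0$. Setting aside the routine LaSalle bookkeeping, I expect the real work to be (i) recognizing that the right Lyapunov function is the $w$-weighted potential --- the weighting is exactly what turns the directed Laplacian into a matrix with a definite symmetric part --- and (ii) managing the set-valued Lie derivative so that the discontinuities of the $f_i$ never actually obstruct the estimate, the crucial device being to test the constancy condition in \eqref{e:set-valuedLie} against the \emph{specific} subgradient $\diag(w)\eta$ rather than an arbitrary one. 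A secondary point that needs care is deriving radial unboundedness of $\Phi$ from Assumption~\ref{as:monotone}, since that is what makes $S_c$ compact.
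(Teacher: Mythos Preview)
Your proposal is correct and follows essentially the same route as the paper: the same $w$-weighted potential $\Phi=w^\top F$ as Lyapunov function, the same key device of evaluating the set-valued Lie derivative at the particular subgradient $\diag(w)\eta$ to obtain $a=-\eta^\top\diag(w)L\eta\leq 0$, and the same identification of the zero set with $\calD_1$ via $\ker\big(\diag(w)L+L^\top\diag(w)\big)=\spa\{\mathds{1}_n\}$. If anything you are more careful than the paper, which invokes LaSalle without explicitly exhibiting the compact strongly invariant set that you obtain from radial unboundedness of $\Phi$; just be aware that this last step needs Assumption~\ref{as:monotone} to be read as $|f_i(x_i)|\to\infty$ when $|x_i|\to\infty$ (both directions), otherwise $F_i$ need not be coercive.
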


\begin{proof}
	
	Consider the Lyapunov function $V_1(x)=w^TF(x)$ where $w\in\R^n_{+}$ is given by Property \ref{connected_laplacian} and $F(x)=[F_1(x_1),\ldots,F_n(x_n)]$ with $F_i(x_i)=\int_0^{x_i}f_i(\tau)d\tau$.
	It can be verified that $V_1\in\calC^0$ and $V_1$ is convex which implies that $V_1$ is regular. Moreover, by the monotonicity of $f_i$, we have $\partial F_i(x_i)=[f_i(x_i^-),f_i(x_i^+)]=\calF[f_i](x_i)$. Hence $V_1$ is locally Lipschitz continuous.
	
	Let $\Psi_1$ be defined as
	\begin{equation}
	\Psi_1 = \{t\geq 0 \mid \textnormal{both } \dot{x}(t) \textnormal{ and } \frac{d}{dt}V_1(x(t)) \textnormal{ exist} \}.
	\end{equation}
	Since $x$ is absolutely continuous and $V_1$ is locally Lipschitz, we can let $\Psi_1=\R_{\geq 0}\setminus\bar{\Psi}_1$ where $\bar{\Psi}_1$ is a Lebesgue measure zero set. By Lemma 1 in \cite{Bacciotti1999}, we have
	\begin{equation}
	\frac{d}{dt}V_1(x(t))\in \tilde{\mathcal{L}}_{\calK_1}V_1(x(t))
	\end{equation}
	for all $t\in\Psi_1$ and hence that the set $\tilde{\mathcal{L}}_{\calK_1}V_1(x(t))$ is nonempty for all $t\in\Psi_1$. For $t\in\bar{\Psi}_1$, we have that $\tilde{\mathcal{L}}_{\calK_1}V_1(x(t))$ is empty, and hence $\max \tilde{\mathcal{L}}_{\calK_1}V_1(x(t))< 0$. In the following, we only consider $t\in\Psi_1$.
	
	The gradient of $V_1$ is given as
	\begin{equation}
	\partial V_1(x) =\co \{ \diag(w)\nu \mid \nu\in \bigtimes_{i=1}^n\calF[f_i](x_i) \}.
	\end{equation}
	Then $\forall a\in \tilde{\mathcal{L}}_{\calK_1}V_1(x(t))$, we have that $\exists u\in\bigtimes_{i=1}^n\calF[f_i](x_i)$ such that
	\begin{equation}
	a = -u^T L^T \diag(w)\nu
	\end{equation}
	for all $\nu \in\bigtimes_{i=1}^n\calF[f_i](x_i)$. A special case is that $\nu=u$, which implies that $a\leq 0$ by Property \ref{connected_laplacian}. Hence we have $\max \tilde{\mathcal{L}}_{\calK}V_1(x(t))\leq 0.$ Moreover, $a=0$ if and only if $\bigtimes_{i=1}^n\calF[f_i](x_i)\cap \spa\{\mathds{1}_n\}\neq\emptyset.$ Hence, by the fact that $\calD_1$ is closed, we have $\overline{\{x\in\R^n\mid 	0\in\tilde{\calL}_{\calK}V_1(x)\}}= \calD_1$.
	By Theorem \ref{chap_preli:thm_stability}, all the Filippov trajectories converges into the largest weakly invariant set containing in $\overline{\{x\in\R^n\mid 	0\in\tilde{\calL}_{\calK}V_1(x)\}}.$ Hence the conclusion holds.
\end{proof}

\begin{theorem}\label{th:ST_nonlinear1}
	Suppose the nonlinear functions in \eqref{e:nonlinear1} can be formulated as  $f(x)=[\bar{f}(x_1),\bar{f}(x_2),\ldots,\bar{f}(x_n)]$ where $\bar{f}$ satisfies Assumption \ref{as:monotone}. Then all the Filippov solutions of \eqref{e:nonlinear1_fili} converge in to
	\begin{equation}\label{e:D_1}
	\calD_2=\{ x\in\R^n \mid \exists a\in\R \textnormal{ s.t. } a\mathds{1}_n\in \bigtimes_{i=1}^n\calF[\bar{f}](x_i)\}
	\end{equation} asymptotically if the underlying graph $\calG$ containing a spanning tree.
\end{theorem}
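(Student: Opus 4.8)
The plan is to combine the boundedness of trajectories (obtained from monotonicity of the extremal coordinates) with a decomposition of $\calG$ into strongly connected components, thereby reducing the statement to Theorem~\ref{th:SC_nonlinear1} by an induction over the components.

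First I would show that $V(x)=\max_{i\in\calI}x_i$ and $W(x)=-\min_{i\in\calI}x_i$ (both regular and Lipschitz by Lemma~\ref{regular_Lipschitz_lyapunov}) are nonincreasing along every Filippov solution of \eqref{e:nonlinear1_fili}. Indeed $\partial V(x)=\co\{e_i\mid i\in I_{\max}(x)\}$ with $I_{\max}(x)=\{i\mid x_i=V(x)\}$, so every $a\in\tilde{\calL}_{\calK_1}V(x)$ equals $-(L\mu)_i=\sum_j a_{ij}(\mu_j-\mu_i)$ for some $\mu\in\bigtimes_i\calF[\bar f](x_i)$, with the same value for all $i\in I_{\max}(x)$; since $x_j\le x_i$ for $i\in I_{\max}(x)$, Property~\ref{p:fili_monotone}(ii) gives $\mu_j\le\mu_i$, hence $a\le 0$, and dually $\max\tilde{\calL}_{\calK_1}W\le 0$. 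Consequently every maximal solution stays for all forward time in the compact box $S=\{x\mid \min_j x_j(0)\le x_i\le\max_j x_j(0)\ \forall i\}$, which is strongly invariant; thus solutions are complete and their $\omega$-limit sets are nonempty, compact and weakly invariant for \eqref{e:nonlinear1_fili}.

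Next I would use the component decomposition. Since $\calG$ contains a spanning tree, its condensation has a unique source, namely the strongly connected component $C_1$ containing a root $r$, and the components can be enumerated $C_1,\dots,C_p$ so that every edge joining two distinct components runs from a lower- to a higher-indexed one. Put $U_\ell=C_1\cup\cdots\cup C_\ell$; then $U_\ell$ is closed under taking in-neighbours, so (i) the $U_\ell$-coordinates evolve autonomously, the restricted inclusion being exactly \eqref{e:nonlinear1_fili} for the induced subgraph $\calG[U_\ell]$ (whose Laplacian is the corresponding principal submatrix of $L$), and (ii) $\calG[U_\ell]$ again contains a spanning tree rooted at $r$, with $\calG[U_1]=\calG[C_1]$ strongly connected. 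I claim by induction on $\ell$ that every Filippov solution of the $\calG[U_\ell]$-system converges to $\calD_2|_{U_\ell}:=\{z\mid \bigcap_{i\in U_\ell}\calF[\bar f](z_i)\neq\emptyset\}$; the case $\ell=p$ is the theorem. The base case $\ell=1$ is Theorem~\ref{th:SC_nonlinear1} (for a single repeated function $\calD_1=\calD_2$). For the step, given a solution $x(\cdot)$ of the $\calG[U_\ell]$-system, its $U_{\ell-1}$-restriction solves the $\calG[U_{\ell-1}]$-system, hence converges to $\calD_2|_{U_{\ell-1}}$ by the inductive hypothesis; moreover $\max_{i\in U_\ell}x_i$, $\min_{i\in U_\ell}x_i$, $\max_{i\in U_{\ell-1}}x_i$ and $\min_{i\in U_{\ell-1}}x_i$ are all monotone (each along its own autonomous subsystem, by the computation above), hence constant on the $\omega$-limit set $\Omega$ of $x(\cdot)$. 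Fix $y\in\Omega$ and a complete trajectory $z(\cdot)\subseteq\Omega$ through $y$; then $z_{U_{\ell-1}}(t)\in\calD_2|_{U_{\ell-1}}$ for all $t$, and the interval $J:=\bigcap_{i\in U_{\ell-1}}\calF[\bar f](z_i(t))$ is nonempty, closed and independent of $t$. Taking $i^*\in C_\ell$ with $z_{i^*}(t)$ maximal over $C_\ell$, one has $\dot z_{i^*}(t)\le 0$ unless every in-neighbour $j$ of $i^*$ — some of which lie in $U_{\ell-1}$, because $C_\ell$ is not a source — satisfies $\mu_j=\mu_{i^*}$; combining this with strong connectivity of $C_\ell$ and constancy of $\max_{i\in U_\ell}x_i$ over $\Omega$, one forces $\mu_{i^*}\in J$ and, propagating around $C_\ell$, that $\bigcap_{i\in C_\ell}\calF[\bar f](z_i(t))$ meets $J$; the dual argument at a $C_\ell$-minimal node closes $J$ from below, so $\bigcap_{i\in U_\ell}\calF[\bar f](z_i(t))\neq\emptyset$, i.e. $y\in\calD_2|_{U_\ell}$. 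Hence $\Omega\subseteq\calD_2|_{U_\ell}$, and since $\calD_2|_{U_\ell}$ is closed and $\mathrm{dist}(x(t),\Omega)\to 0$, the solution converges to $\calD_2|_{U_\ell}$, completing the induction.

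I expect the main obstacle to be precisely this squeezing step: showing the maximal and minimal coordinates of $C_\ell$ cannot persist strictly outside the already-settled interval $J$ of $U_{\ell-1}$. Two features make it delicate. First, $\bar f$ may be flat on intervals, so equalised outputs do not pin down the states and $\calD_2$ has nonempty interior in general; the configurations in which a $C_\ell$-extremum fails to move are then exactly those already lying in $\calD_2$, and this must be exploited rather than fought. Second, $\bar f$ may be discontinuous, so the argument has to be carried out with measurable Filippov selections $\mu(t)\in\bigtimes_i\calF[\bar f](z_i(t))$ and the set-valued Lie derivative rather than with classical derivatives. The role of the spanning-tree hypothesis is exactly to supply, at every stage, a directed path from the settled set $U_{\ell-1}$ into each extremal node of $C_\ell$, which is what drives the equalisation; a general digraph with several source components breaks this, consistently with Theorem~\ref{th:SC_nonlinear1} also requiring a connectivity assumption. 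An essentially equivalent alternative is a single application of the LaSalle principle (Theorem~\ref{chap_preli:thm_stability}) with the Lyapunov function $\sum_{\ell=1}^{p}\varepsilon_\ell\big(\max_{i\in U_\ell}x_i-\min_{i\in U_\ell}x_i\big)$ for suitably decaying $\varepsilon_\ell>0$, but characterising its zero-derivative set still needs the same reachability argument.
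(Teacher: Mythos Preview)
Your boundedness argument via $V$ and $W$ is essentially the paper's steps (ii)--(iv), with one slip: when $i\in I_{\max}(x)$ and $j$ is a neighbour with $x_j=x_i$ (so $j\in I_{\max}(x)$ too), Property~\ref{p:fili_monotone}(ii) does \emph{not} give $\mu_j\le\mu_i$, since it requires strict inequality. The paper handles this by passing to the subset $\alpha'(\nu)=\{i\in I_{\max}(x)\mid \nu_i=\max_{k\in I_{\max}(x)}\nu_k\}$ and arguing there; you should do the same.

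The real divergence from the paper is in how you get from ``$\omega$-limit set lies in $\{0\in\tilde{\calL}_{\calK_2}V\}\cap\{0\in\tilde{\calL}_{\calK_2}W\}$'' to ``$\omega$-limit set lies in $\calD_2$''. You propose an induction over the strongly connected components, while the paper argues globally by contradiction: if a solution stayed outside $\calD_2$, then $d\big(\calF[\bar f](\underline{\tilde x}),\calF[\bar f](\overline{\tilde x})\big)>C>0$ for the limiting max/min, and since $\calG$ has a spanning tree one can solve $w^{ij\top}L=(e_i-e_j)^\top$ for every pair $i,j$; picking $i\in\alpha(\tilde x(t))$, $j\in\beta(\tilde x(t))$ gives $|w^{ij\top}\dot{\tilde x}|>C/2$ uniformly, and integrating produces an unbounded quantity from bounded ingredients. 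This bypasses any component-by-component tracking.

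Your squeezing step, as written, has a concrete gap. You assert that the node $i^*\in C_\ell$ maximising $z_{i^*}$ over $C_\ell$ has ``some in-neighbours in $U_{\ell-1}$, because $C_\ell$ is not a source'', but that is false: only \emph{some} node of $C_\ell$ receives an edge from $U_{\ell-1}$, not necessarily $i^*$. Moreover, even granting that, the conclusion ``$\mu_{i^*}\in J$'' does not follow: if $z_{i^*}>M_{\ell-1}:=\max_{k\in U_{\ell-1}}z_k$ there is no reason for $\calF[\bar f](z_{i^*})$ to meet $J=\bigcap_{k\in U_{\ell-1}}\calF[\bar f](z_k)$ at all, and propagating an equalisation $\mu_j=\mu_{i^*}$ around $C_\ell$ constrains the selection $\mu$, not the states $z$. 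What you actually need is that on $\Omega$ the \emph{state} extremum over $U_\ell$ cannot sit strictly outside the $U_{\ell-1}$-range in the sense of $\calF[\bar f]$, and this requires a genuine dynamical argument (e.g.\ showing that if it did, then $\max_{i\in U_\ell}z_i$ would strictly decrease somewhere on $\Omega$, contradicting its constancy). That argument is doable but is essentially a local version of the paper's integral contradiction, so the component induction buys little. If you want to keep your structure, the cleanest fix is to run the paper's $w^{ij}$ argument once on the whole graph rather than inductively.
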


\begin{proof}
	In this case, the differential inclusion \eqref{e:nonlinear1_fili} can be written as
	\begin{equation}\label{e:nonlinear1_fili_barf}
	\begin{aligned}
	\dot{x}   \in -L \bigtimes_{i=1}^n\calF[\bar{f}](x_i)
	 := \calK_2(x).
	\end{aligned}
	\end{equation}

{\it (i)} We show an observation about the behaviors of the trajectories corresponding to roots. Since the subgraph corresponding to the roots is strongly connected, by Theorem \ref{th:SC_nonlinear1}, all the Filippov solution of \eqref{e:nonlinear1_fili_barf} converge that
\begin{equation}
\{x\mid \exists a \textnormal{ s.t. } a\in\calF[\bar{f}](x_i), \forall i\in\calI_r \}.
\end{equation}
where $\calI_r =\{i\in\calI \mid v_i \text{ is a root of }\calG\}$.
	
{\it (ii)} Consider candidate Lyapunov functions $V$ as given in \eqref{e:VandW}.
Let $x(t)$ be a trajectory of \eqref{e:nonlinear1_fili_barf} and define
\[\alpha(x(t))=\{k\in\calI\mid x_k(t)=V(x(t))\}.\]
Denote $x_i(t)=\overline{x}(t)$ for $i\in\alpha(x(t))$.
The generalized gradient of $V$ is given as [\cite{Clarke1990optimization}, Example 2.2.8]
\begin{equation}
\partial V(x(t))  = \mathrm{co}\{e_k\in\R^n \mid k \in \alpha(x(t)) \}.
\end{equation}

Similar to the proof of Theorem \ref{th:SC_nonlinear1}, we can define $\Psi_2$ and we only consider $t\in\Psi_2$ such that $\tilde{\mathcal{L}}_{\calK_2}V(x(t))$ is nonempty and $\R_{\geq 0}\setminus\Psi_2$ is a Lebesgue measure zero set.
For $t \in \Psi_2$, let $a\in\tilde{\mathcal{L}}_{\calK_2}V(x(t))$.
By definition, there exists a $\nu^a \in \bigtimes_{i=1}^n\calF[\bar{f}](x_i)$ such that $a = (-L\nu^a)^{\top}\cdot \zeta$ for all
$\zeta\in\partial V(x(t))$. Consequently, by choosing $\zeta = e_k$ for $k \in \alpha(x(t))$, we observe that $\nu^a$ satisfies
	\begin{equation} \label{e:nu-alpha}
	   -L_{k,\cdot}	\nu^a  = a \qquad \forall k \in \alpha(x(t)).
	\end{equation}

Next, we want to show that $\max \tilde{\mathcal{L}}_{\calK_2}V(x(t))\leq 0$ for all $t\in\Psi_2$ by considering two possible cases: $\calI_r\nsubseteq\alpha(x(t))$ or
$\calI_r\subseteq\alpha(x(t))$.



If $\calI_r\subset \alpha(x(t))$, there are two subcases. First, $|\calI_r|=1$, i.e., there is only one root, denoted as $v_i$. Then $L_{i,\cdot}=0$, hence $L_{i,\cdot}\nu=0$ for any $\nu\in\bigtimes_{i=1}^n\calF[\bar{f}](x_i)$. By the observation \eqref{e:nu-alpha}, we have $\tilde{\mathcal{L}}_{\calK_2}V(x(t))=\{0 \}$. Second, $|\calI_r|\geq 2$. By the fact that the subgraph spanned by the roots is strongly connected, there exists $w_i>0$ for $i\in\calI_r$ such that $\sum_{i\in\calI_r}w_iL_{i,\cdot}=0_n, $
which implies that
\begin{equation}
\sum_{i\in\calI_r}w_iL_{i,\cdot}\nu=0
\end{equation}
for any $\nu\in\bigtimes_{i=1}^n\calF[\bar{f}](x_i)$. Again, by the observation \eqref{e:nu-alpha}, we have $\tilde{\mathcal{L}}_{\calK_2}V(x(t))=\{0 \}$.

If $\calI_r \nsubseteq \alpha(x(t))$, i.e., there exists $i\in\calI_r\setminus \alpha(x(t))$. We define a subset $\alpha'(\nu)$ as
\begin{equation}
\alpha'(\nu)=\{i\in\alpha(x(t)) \mid \nu_i=\max_{i\in\alpha(x(t))} \nu_i \}
\end{equation}
for any $\nu\in\bigtimes_{i=1}^n\calF[\bar{f}](x_i)$. From Property \ref{p:fili_monotone} (ii),  for any $j\in\alpha'(\nu)$, we know that $\nu_j=\max \nu_i$, thus $L_{j,\cdot}\nu\geq 0$. By the fact that the choice of $\nu$ is arbitrary in $\bigtimes_{i=1}^n\calF[\bar{f}](x_i)$ and the observation \eqref{e:nu-alpha}, we have $\tilde{\mathcal{L}}_{\calK_2}V(x(t))\subset \R_{\leq 0}$. Moreover, denoting
\begin{equation}
\calE_{\alpha(x)}=\{e_{ij}\in\calE\mid j\in\alpha(x) \},
\end{equation}
we shall show that $0\in\tilde{\mathcal{L}}_{\calK_2}V(x)$ if and only if $\exists \nu\in \bigtimes_{i=1}^n\calF[\bar{f}](x_i)$ such that $\nu_i=\nu_j$ for any $e_{ij}\in\calE_{\alpha(x)}$, which is equivalent to  $\calF[\bar{f}](x_i)\cap\calF[\bar{f}](x_j)\neq \emptyset$ for all $e_{ij}\in\calE_{\alpha(x)}$.
The sufficient part is straightforward, in fact we can take $\nu_i=\nu_j=f(\overline{x}^-)$ for any $e_{ij}\in\calE_{\alpha(x)}$. Then $0\in\tilde{\mathcal{L}}_{\calK_2}V(x)$. The necessary part can be proved as follows. Since $0\in\tilde{\mathcal{L}}_{\calK_2}V(x)$, there exists $\nu\in \bigtimes_{i=1}^n\calF[\bar{f}](x_i)$ such that $L_{j,\cdot}\nu= 0$ for any $j\in\alpha(x)$. Then this $\nu$ satisfies that $\alpha'(\nu)=\alpha(x)$. Indeed, if $\alpha'(\nu) \subsetneqq \alpha(x)$, then for any $j\in\alpha'(\nu)$ with $e_{ij}\in\calE$ and $i\notin\alpha'(\nu)$, $L_{j,\cdot}\nu<0$. 
Hence $\alpha'(\nu)=\alpha(x)$. Furthermore, by using the same argument, we have for any $e_{ij}\in\calE$ satisfying $i\notin\alpha(x)$ and $j\in\alpha(x)$, $f(\overline{x}^-)\in\calF[\bar{f}](x_i)$.


{\it (iii)} For the Lyapunov functions $W$ as given in \eqref{e:VandW},
denote $\beta(x(t))=\{i\in\calI\mid x_i(t)=-W(x(t))\}$, $x_i(t)=\underline{x}(t)$ for $i\in\beta(x(t))$, and $\calE_{\beta(x(t))} =\{e_{ij}\in\calE\mid j\in\beta(x(t)) \}$. By using similar computations, we find that $\max \tilde{\mathcal{L}}_{\calK_2}W(x(t))\leq 0$ and $0\in\tilde{\mathcal{L}}_{\calK_2}W(x(t))$ if and only if $\exists \nu\in \bigtimes_{i=1}^n\calF[\bar{f}](x_i)$ such that $\nu_i=\nu_j$ for any $e_{ij}\in\calE_{\beta(x(t))}$, which is equivalent to  $\calF[\bar{f}](x_i)\cap\calF[\bar{f}](x_j)\neq \emptyset$ for all $e_{ij}\in\calE_{\beta(x(t))}$.

{\it (iv)} So far we have that $V(x(t))$ and $W(x(t))$ are not increasing along the trajectories $x(t)$ of the
system \eqref{e:nonlinear1_fili_barf}. Hence, the trajectories are bounded and remain in the set $[\underline{x}(0),\overline{x}(0)]^n$ for all $t\geq0$.
Therefore, for any $N\in\R_+$, the set $S_N=\{x\in\R^n \mid \|x\|_{\infty}\leqslant N\}$ is
strongly invariant for \eqref{e:nonlinear1_fili_barf}.
By Theorem \ref{chap_preli:thm_stability}, we
have that all solutions of \eqref{e:nonlinear1_fili_barf} starting in $S_N$
converge to
the largest weakly invariant set $M$ contained in
\begin{equation}\label{e: 0inLVandLW}
\begin{aligned}
S_N & \cap\overline{\{x\in\R^n:
	0\in\tilde{\mathcal{L}}_{\calK_2}V(x)\}} \\
	& \cap\overline{\{x\in\R^n:
	0\in\tilde{\mathcal{L}}_{\calK_2}W(x)\}}.
\end{aligned}
\end{equation}

{\it (v)} We have proved the asymptotic stability of the system. Next we will prove that the set $\calD_2$ is strongly invariant and for any $x_0\notin\calD_2$, all the solution satisfying $x(0)=x_0$ will converge to $\calD_2$.

We start with the strong invariance of $\calD_2$. Notice that by the monotonicity of $\bar{f}$ we can reformulate $\calD_2$ as
\begin{equation}
\calD_2=\{x\mid \calF[\bar{f}](\underline{x})\cap \calF[\bar{f}](\overline{x})\neq \emptyset \}.
\end{equation}
For any $x_0\in\calD_2$, we have known that any trajectories starting from $x_0$, $V(x(t))$ and $W(x(t))$ are not increasing. Hence $\overline{x}(t)\leq \overline{x}_0$ and $\underline{x}(t)\geq \underline{x}_0$ for all $t\geq 0$ which, by Property \ref{p:fili_monotone}, implies that $\calF[\bar{f}](\underline{x}(t))\cap \calF[\bar{f}](\overline{x}(t))\neq \emptyset$ for all $t$ and $x(t)$ satisfying $x(0)=x_0$. Then $x(t)\in\calD_2$ which implies that $\calD_2$ is strongly invariant.

Next we show that for any $x_0\notin\calD_2$, all the solution satisfying $x(0)=x_0$ will converge to $\calD_2$. We will prove it by contradictions. If not, i.e., there exists $x_0\notin\calD_2$ and one solution $\tilde{x}(t)$ satisfying $\tilde{x}(0)=x_0$ does not converge to $\calD_2$. Since the set $\calD_2$ is strongly invariant, we have $\tilde{x}(t)\notin\calD_2$ for all $t\geq 0.$ Then $\calF[\bar{f}](\underline{\tilde{x}})\cap \calF[\bar{f}](\overline{\tilde{x}})= \emptyset$, where
\begin{equation*}
\begin{aligned}
\overline{\tilde{x}}  = \lim_{t\rightarrow\infty}V(\tilde{x}(t)),~
\underline{\tilde{x}}  = -\lim_{t\rightarrow\infty}W(\tilde{x}(t)).
\end{aligned}
\end{equation*}
Hence there exists a constant $C>0$, such that $d(\calF[\bar{f}](\underline{\tilde{x}}), \calF[\bar{f}](\overline{\tilde{x}}))>C$ where $d(S_1,S_2)=\inf_{y_1\in S_1,y_2\in S_2}d(y_1,y_2)$ is the \emph{distance} between two sets $S_1$ and $S_2$. For any $i,j\in\calI$ with $i\neq j$, there exists a vector $w^{ij}\in\R^n$ such that $w^{ij^\top} L=(e_i-e_j)^T$. For each pair $i,j\in\calI$, we choose one $w^{ij}$ and collect all the $w^{ij}$ for $i,j\in\calI$ in the set $\Omega$. Notice that there are only finite number of vectors in $\Omega$.  Then for any $t, i\in\alpha(\tilde{x}(t))$ and $j\in\beta(\tilde{x}(t))$, we have $\overline{\tilde{x}}(t)\geq\overline{\tilde{x}}$ and $ \underline{\tilde{x}}(t)\leq\underline{\tilde{x}} $. Moreover, since $\tilde{x}(t)$ is uniformly bounded, there exist a constant $\tau$ which does not depend on $t$ such that for any $s\in[t, t+\tau]$
\begin{equation}\label{e:differential_lower_bound}
w(s)^T \dot{x}(s)>\frac{C}{2}.
\end{equation}
where $w:\R\rightarrow\Omega$ is piecewise constant and $w(s)=w^{ij}$ with $i\in\alpha(t),j\in\beta(t)$ for $s\in[t,t+\tau]$. Note that for any $T$, the function $w(s)^T\dot{x}(s)$ is Lebesgue integrable on $[0, T]$, and by \eqref{e:differential_lower_bound} we have
\begin{equation}
\int_{0}^T w(s)^\top \dot{x}(s)ds>\frac{C}{2}T
\end{equation}
which converge to infinity as $T\rightarrow \infty$.
This is a contradiction to the fact that $w(s)$ is globally bounded and for any $T<\infty$ and $i\in\calI$, $\int_{0}^T \dot{x}_i(s)ds$ is bounded. Hence we have for any $x_0\notin\calD_2$, all the solution satisfying $x(0)=x_0$ will converge to $\calD_2$. Here ends the proof.

\end{proof}

\begin{remark}\label{r:positive_nonlinear1}
	From the proof of Theorem \ref{th:ST_nonlinear1}, we know the maximal components of the trajectories of the system \eqref{e:nonlinear1_fili} are not increasing while the minimal ones are not decreasing. Hence \eqref{e:nonlinear1_fili} is a positive system (see e.g., \cite{Rantzer2011}), i.e., with positive initial conditions, the trajectories will be positive for all the time.
\end{remark}

\begin{remark}
	A more general case of the dynamical system \eqref{e:nonlinear1} than Theorem \ref{th:ST_nonlinear1}, namely with different nonlinear functions $f_i$ for each agents and the underlying graph being directed containing a spanning tree,  is still open.
\end{remark}

\section{Systems with Nonlinear Communication}\label{s:communication}

In this section we consider a different scenario from Section \ref{s:measure}, namely instead of nonlinear measurement of the agents states, we consider the scenario that the communication among the agents is effected by some nonlinearities. Specifically, we consider the following nonlinear consensus protocol
\begin{equation}\label{e:nonlinear2}
\begin{aligned}
\dot{x}_i & = -\sum_{j=1}^n a_{ij}g_{ij}(x_i-x_j)
\end{aligned}
\end{equation}
where $g_{ij}:\R\rightarrow\R$ satisfying Assumption \ref{as:monotone}.
We understand the solution of \eqref{e:nonlinear2} in the Filippov sense.

In this section, we consider three cases, namely the connected undirected graph, the ring graph, and the directed graphs being a directed spanning tree.

Firstly, we consider that case that the underlying graph is undirected. In this case, we assume that $g_{ij}(\cdot)$ is odd for all $a_{ij}\neq0$, i.e., $g_{ij}(y)=-g_{ij}(-y)$ and let $m$ denotes the number edges. By a given ordering of the $m$ edges, we can re-denote the edges as $e_1, \ldots, e_m$ and the corresponding weight as $a_1, \ldots, a_m$. From the assumption about $g_{ij}$ being odd,	 we can write the system \eqref{e:nonlinear2} in a vectorized form as follows.
	\begin{equation}
	\begin{aligned}\label{bhx}
	\dot{x}  = -B g(B^\top x)
	 := -B h(x)
	\end{aligned}
	\end{equation}
	where $B$ is the incidence matrix and $g(x)=[a_1g_1(x_1),a_2g_2(x_2),\ldots,a_mg_m(x_m)]$.

\begin{theorem}\label{th:measure_undirected}
	Suppose the underlying graph is a connected undirected graph, the nonlinear functions satisfy Assumption \ref{as:monotone} and are odd, then all the Filippov trajectories of \eqref{e:nonlinear2} asymptotically converge into
	\begin{equation}\label{e:H}
	\calH_1=\{ x\in\R^n \mid  0_m\in \bigtimes_{i=1}^m \calF[g_i](B_{\cdot,i}^\top x) \}.
	\end{equation}
\end{theorem}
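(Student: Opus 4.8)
The plan is to treat the vectorized form \eqref{bhx} as a set‑valued gradient system, take the natural non‑smooth potential as Lyapunov function, and invoke the LaSalle principle (Theorem \ref{chap_preli:thm_stability}) exactly as in the proof of Theorem \ref{th:SC_nonlinear1}; the only genuinely new point is that oddness of the $g_i$ is what forces the LaSalle invariant set to coincide with $\calH_1$. First I would fix the Filippov form. Since $B$ is a constant matrix and the $i$th component of $h$ is the composition of the increasing map $a_ig_i$ with the linear map $x\mapsto B_{\cdot,i}^\top x$ (whose Jacobian $B_{\cdot,i}^\top$ has rank $1$), Property \ref{p:calculus for Filippov} gives that every Filippov solution of \eqref{e:nonlinear2} satisfies
\begin{equation}\label{e:K-undir}
\dot x\in -B\bigtimes_{i=1}^m a_i\calF[g_i](B_{\cdot,i}^\top x)=:\calK(x).
\end{equation}
Because $\mathds{1}_n^\top B=0_m$, every $\nu\in\calK(x)$ satisfies $\mathds{1}_n^\top\nu=0$, so the average $\tfrac1n\mathds{1}_n^\top x$ is invariant along all solutions.

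Next I would introduce the Lyapunov function $\Phi(x)=\sum_{i=1}^m G_i(B_{\cdot,i}^\top x)$ with $G_i(y)=\int_0^y a_ig_i(\tau)\,d\tau$. Each $G_i$ is convex with $\partial G_i(y)=[a_ig_i(y^-),a_ig_i(y^+)]=a_i\calF[g_i](y)$, and, by Assumption \ref{as:monotone} together with oddness, $G_i(y)\to+\infty$ as $|y|\to\infty$. Hence $\Phi\in\calC^0$ is convex (so regular) and locally Lipschitz, and by the chain rule for subdifferentials of convex functions $\partial\Phi(x)=\{B\mu\mid \mu\in\bigtimes_{i=1}^m a_i\calF[g_i](B_{\cdot,i}^\top x)\}$. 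Since the graph is connected, $\ker B^\top=\spa\{\mathds{1}_n\}$, so $\Phi$ restricted to any hyperplane $\{x\mid\mathds{1}_n^\top x=c\}$ is radially unbounded; consequently each set $S_{c,\ell}=\{x\mid\mathds{1}_n^\top x=c,\ \Phi(x)\le\ell\}$ is compact.

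Now the Lie derivative and the crux. For $a\in\tilde{\mathcal{L}}_{\calK}\Phi(x)$ there is $\mu\in\bigtimes_i a_i\calF[g_i](B_{\cdot,i}^\top x)$ with $(B\mu')^\top(-B\mu)=a$ for all $\mu'$ in the same set; taking $\mu'=\mu$ gives $a=-\|B\mu\|^2\le 0$. Thus $\max\tilde{\mathcal{L}}_{\calK}\Phi(x)\le 0$ everywhere, so by Lemma 1 in \cite{Bacciotti1999} (as in the earlier proofs) $\Phi$ is non‑increasing along every Filippov solution, which together with the conservation of $\mathds{1}_n^\top x$ makes each $S_{c,\ell}$ strongly invariant. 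Moreover $0\in\tilde{\mathcal{L}}_{\calK}\Phi(x)$ iff $B\mu=0$ for some $\mu\in\bigtimes_i a_i\calF[g_i](B_{\cdot,i}^\top x)$, and I claim this is exactly $x\in\calH_1$. Indeed, given such a $\mu$, put $y=B^\top x$; since $\mu\in\ker B$ and $y\in\im B^\top=(\ker B)^\perp$, we get $\sum_i y_i\mu_i=\langle x,B\mu\rangle=0$. Oddness and monotonicity give $g_i(0)=0$, hence $\mu_i$ and $y_i$ have the same sign (with $\mu_i$ possibly $0$), so each $y_i\mu_i\ge 0$, and therefore each $y_i\mu_i=0$. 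For each $i$: if $y_i=0$ then $0\in[g_i(0^-),g_i(0^+)]=\calF[g_i](y_i)$; if $y_i\neq0$ then $\mu_i=0\in a_i\calF[g_i](y_i)$, i.e. $0\in\calF[g_i](y_i)$. Either way $x\in\calH_1$, and the reverse inclusion is immediate ($\mu=0$ works), so $\{x\mid 0\in\tilde{\mathcal{L}}_{\calK}\Phi(x)\}=\calH_1$, which is closed (it is an intersection of closed slabs $\{x\mid B_{\cdot,i}^\top x\in[d_i,c_i]\}$, $[d_i,c_i]$ being the closed interval on which $0\in\calF[g_i]$).

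Finally, for any initial condition I would apply Theorem \ref{chap_preli:thm_stability} on the compact strongly invariant set $S_{c,\ell}$ containing it: the trajectory converges to the largest weakly invariant set contained in $S_{c,\ell}\cap\overline{\{x\mid 0\in\tilde{\mathcal{L}}_{\calK}\Phi(x)\}}=S_{c,\ell}\cap\calH_1\subseteq\calH_1$, which proves the theorem. The step I expect to be the main obstacle is the crux — identifying $\{x\mid 0\in\tilde{\mathcal{L}}_{\calK}\Phi(x)\}$ with $\calH_1$. This is precisely where oddness is essential: without it the orthogonality $\langle x,B\mu\rangle=0$ no longer pins down the sign of the $y_i\mu_i$, the invariant set can be strictly larger than $\calH_1$ (it may contain "circulating" equilibria supported on cycles of $\calG$), and the conclusion fails. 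The remaining pieces — the Filippov calculus bookkeeping in \eqref{e:K-undir} and the radial‑unboundedness/compactness argument on the hyperplane — are routine.
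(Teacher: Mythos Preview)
Your proof is correct, but it takes a different route from the paper. The paper uses the \emph{smooth} quadratic Lyapunov function $V_3(x)=\tfrac12 x^\top x$; since $\partial V_3(x)=\{x\}$ is a singleton, the set-valued Lie derivative is simply $\{-x^\top B\nu\mid \nu\in\bigtimes_i a_i\calF[g_i](B_{\cdot,i}^\top x)\}$, and the sign agreement between $\nu_i$ and $(B^\top x)_i$ (from monotonicity together with $g_i(0)=0$) gives $\max\tilde{\calL}\le 0$ directly. The LaSalle set $\{0\in\tilde{\calL}_{\calK_3}V_3\}$ is then identified with $\calH_1$ by the same $y_i\nu_i\ge 0$, $\sum y_i\nu_i=0$ argument you use. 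Compact invariant sets are just the closed balls $\{V_3\le c\}$, so no conservation law or radial-unboundedness-on-a-hyperplane is needed.

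Your choice $\Phi(x)=\sum_i G_i(B_{\cdot,i}^\top x)$ is the natural nonsmooth potential for which \eqref{bhx} is a (generalized) gradient flow, and your argument via $-\|B\mu\|^2$ and $\ker B\perp\im B^\top$ is clean; the trade-off is that $\Phi$ is not smooth, so you have to handle the multivalued $\partial\Phi$ (the convex chain rule you invoke is indeed valid here since all $G_i$ are finite convex), and compactness forces you through the conserved average and coercivity on $\{\mathds 1^\top x=c\}$. Both approaches ultimately hinge on the same sign observation; the paper's is shorter and avoids nonsmooth-analysis bookkeeping, while yours makes the gradient structure explicit and mirrors the Lyapunov function $V_1=w^\top F(x)$ used in Theorem~\ref{th:SC_nonlinear1}. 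One minor remark: you locate the ``essential'' use of oddness at the LaSalle identification step, but in fact oddness is already what allows the vectorization \eqref{bhx} (and hence the conservation of $\mathds 1_n^\top x$) in the first place; at the identification step only the consequence $g_i(0)=0$ is used.
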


\begin{proof}
From (\ref{bhx}) and Property \ref{p:calculus for Filippov}, we know that the Filippov differential inclusion is given as
	\begin{equation}\label{e:nonlinear2_fili_undirected}
	\begin{aligned}
	\dot{x} & \in -B\calF[h](x) \\
	& \subset -B \bigtimes_{i=1}^ma_i\calF[g_i](B^\top_{\cdot, i}x):= \calK_3(x).
	\end{aligned}
	\end{equation}
	
	Consider the Lyapunov function $V_3(x)=\frac{1}{2}x^\top x$ which is smooth, hence $\partial V_3(x(t))  = \{x(t) \}.$  The set-valued Lie derivative $\tilde{\mathcal{L}}_{\calK_3}V_3(x)$ is given as
	\begin{equation}
	\begin{aligned}
	& \tilde{\mathcal{L}}_{\calK_3}V_3(x(t))\\
	 = & \{a\in\R \mid a= -x(t)^\top B  \nu, \nu \in \bigtimes_{i=1}^ma_i\calF[g_i](B^\top_{\cdot, i}x(t))  \}.
	\end{aligned}
	\end{equation}
	In this case $\tilde{\mathcal{L}}_{\calK_3}V_3(x(t))\neq \emptyset$ for all the time.

	By the fact that $g_i$ is monotone and $g_i(0)=0$, we have
	\begin{equation}\label{p:sign-preserving_g_fili}
	\calF[g_i](y_i)\subset \begin{cases}
	\R_{\geq 0} & \textrm{ if } y_i>0,\\
	\R_{\leq 0} & \textrm{ if } y_i<0.
	\end{cases}
	\end{equation}
	Hence, $\nu_i$ and $(B^\top x)_i$ have the same sign for any $\nu \in \bigtimes_{i=1}^ma_i\calF[g_i](B^\top_{\cdot, i}x(t))$ and $i\in\calI$. This implies that $\max \tilde{\mathcal{L}}_{\calK_3}V_3(x)\leq 0$.
	By Theorem \ref{chap_preli:thm_stability}, all solutions of (\ref{e:nonlinear2_fili_undirected}) converge to
	the largest weakly invariant set $M$ contained in
	\begin{equation}
	\overline{\{x\in\R^n:
		0\in\tilde{\mathcal{L}}_{\calK_3}V_3(x)\}}.
	\end{equation}
	Notice that $0\in\tilde{\mathcal{L}}_{\calK_3}V_3(x)$ if and only if $0_m\in \bigtimes_{i=1}^m \calF[g_i](B_{\cdot,i}^\top x)$, and the conclusion holds.	
\end{proof}

Before we present next result, we want to show that the condition $g_{ij}(y)=-g_{ij}(-y)$ is a necessary condition to guarantee the boundedness of trajectories.

\begin{figure}
	\centering
	\begin{subfigure}[b]{4cm}
		\begin{tikzpicture}
		\tikzstyle{EdgeStyle}    = [thin,double= black,
		double distance = 0.5pt]
		\useasboundingbox (0,0) rectangle (4cm,1.5cm);
		\tikzstyle{VertexStyle} = [shading         = ball,
		ball color      = white!100!white,
		minimum size = 20pt,%
		inner sep       = 1pt,]
		
		\Vertex[style={minimum
			size=0.2cm,shape=circle},LabelOut=false,L=\hbox{$v_1$},x=1cm,y=0.7cm]{1}
		\Vertex[style={minimum
			size=0.2cm,shape=circle},LabelOut=false,L=\hbox{$v_2$},x=3cm,y=0.7cm]{2}
		\draw
		(1) edge[-,>=angle 90,thin,double= black,double distance = 0.5pt]
		node[below]{$e_{1}$} (2);
		\end{tikzpicture}
		\caption{Undirected graph}
		\label{fig:di-2nodesA}
	\end{subfigure}
	~ 
	\begin{subfigure}[b]{4cm}
		\begin{tikzpicture}
		\tikzstyle{EdgeStyle}    = [thin,double= black,
		double distance = 0.5pt]
		\useasboundingbox (0,0) rectangle (4cm,1.5cm);
		\tikzstyle{VertexStyle} = [shading         = ball,
		ball color      = white!100!white,
		minimum size = 20pt,%
		inner sep       = 1pt,]
		\Vertex[style={minimum
			size=0.2cm,shape=circle},LabelOut=false,L=\hbox{$v_1$},x=1cm,y=0.7cm]{v1}
		\Vertex[style={minimum
			size=0.2cm,shape=circle},LabelOut=false,L=\hbox{$v_2$},x=3cm,y=0.7cm]{v2}
		\draw
		(v1) edge[bend right,->,>=angle 90,thin,double= black,double distance = 0.5pt]
		node[below]{$e_{12}$} (v2)
		(v2) edge[bend right,->,>=angle 90,thin,double= black,double distance = 0.5pt]
		node[above]{$e_{21}$} (v1);
		\end{tikzpicture}
		\caption{Directed ring}
		\label{fig:di-2nodesB}
	\end{subfigure}%
	~ 

	\caption{Two digraphs with two nodes for Examples \ref{ex:2node_undi_sliding} and Remark \ref{r:ring}.}
	\label{fig:di-2nodes}
\end{figure}
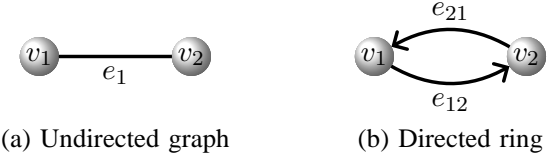

\begin{example}\label{ex:2node_undi_sliding}
	Consider the system \eqref{e:nonlinear2} defined on the undirected graph given as in Fig. \ref{fig:di-2nodesA}.  Furthermore we assume the nonlinear function  $g_1=\varphi$ which is defined as
	\begin{equation}\label{e:varphi}
	\varphi(x)  = \begin{cases}
	1 & \textrm{ if } x>0,\\
	0 & \textrm{ if } x\leq 0,
	\end{cases}
	\end{equation}
	Now the dynamical system can be written as
	\begin{equation}
	\begin{aligned}
	\dot{x}_1 & = \varphi(x_2-x_1) \\
	\dot{x}_2 & = \varphi(x_1-x_2).
	\end{aligned}
	\end{equation}
	With a slight abuse of the notation, we denote
	\begin{equation}
	\varphi(-Lx):=
	\begin{bmatrix}
	\varphi(x_2-x_1) \\
	\varphi(x_1-x_2)
	\end{bmatrix}
	\end{equation}
	where $L$ is the Laplacian matrix of the graph.
	Notice that since $\varphi$ is not an odd function, the previous dynamical system can \emph{not} be written in the form of \eqref{bhx}. Moreover, for any $x_0\in\spa\{\mathds{1}_2 \}$, the Filippov set-valued map
	\begin{equation}\label{e:fili_varphi}
	\calF[\varphi(-Lx)]
	=\overline{\mathrm{co}}\{\begin{bmatrix}
	1 \\ 0
	\end{bmatrix},
	\begin{bmatrix}
	0 \\ 1
	\end{bmatrix} \},
	\end{equation}
	which implies that  $x(t)=x_0+\frac{1}{2}\mathds{1}_2t$ is a Filippov solution. Hence the trajectories can be unbounded. The same conclusion holds for $-\varphi$.
\end{example}

The undesirable behavior $x(t) = \eta(t)\mathds{1}_2$ in the previous example is called \emph{sliding consensus}.

\begin{remark}
	Theorem \ref{th:measure_undirected} is different from Theorem 14 in \cite{Wei2015} in the sense that the sign-preserving (Definition 1 in \cite{Wei2015}) is not assumed for the functions $g_i$ here. Hence, the precise consensus can not be expected in this study.
\end{remark}

Secondly, we consider the case that the underlying graph is a directed ring. Similarly to the undirected case, by relabeling the edges, the dynamical system \eqref{e:nonlinear2} can be written in the following vectorized form
	\begin{equation}\label{e:ring_vec}
	\dot{x}  = g(B^\top x)
	\end{equation}
where $B$ is the incidence matrix of the ring and $g(x)=[a_1g_1(x_1),a_2g_2(x_2),\ldots,a_ng_n(x_n)]$.

\begin{theorem}\label{th:measure_ring}
	Suppose the underlying graph is a ring and all the nonlinear functions $g_{ij}$ satisfy Assumption \ref{as:monotone}. Then all the Filippov trajectories of \eqref{e:nonlinear2} asymptotically converge to
	\begin{equation}
	\calH_2=\{ x\in\R^n \mid  0_n\in \bigtimes_{i=1}^{n} \calF[g_i](B^\top_{\cdot,i}x) \}
	\end{equation}
	if
	\begin{enumerate}
		\item $|\calI|=2$ and $g_{i}$ is odd for any $e_{i}\in\calE$, or \\
		\item $|\calI|\geq3$ and $g_{i}(0)=0, \forall e_{i}\in\calE$ and there exist $e_{i}\in\calE$ such that $\calF[g_{i}](0)=\{0\}$.
	\end{enumerate}
\end{theorem}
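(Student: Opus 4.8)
The plan is to reduce the ring system to the \emph{nonlinear-measurement} protocol \eqref{e:nonlinear1} by passing to difference variables, and then invoke Theorem~\ref{th:SC_nonlinear1}. On a ring each node $i$ has a unique in-neighbour $\sigma(i)$, so \eqref{e:nonlinear2} reads $\dot x_i=-\psi_i(x_i-x_{\sigma(i)})$ with $\psi_i:=a_{i\sigma(i)}g_{i\sigma(i)}$ (this is exactly what \eqref{e:ring_vec} spells out, up to the orientation of the incidence matrix). I would set $z_i:=x_i-x_{\sigma(i)}$. By Property~\ref{p:calculus for Filippov}(ii)--(iv), each $\calF[F_i](x)=-\calF[\psi_i](z_i)$ for the $i$-th component $F_i$ of the ring vector field, so a Filippov solution satisfies $\dot x_i\in-\calF[\psi_i](z_i)$ for all $i$ simultaneously, i.e.\ $\dot x_i=-\mu_i$ with $\mu_i\in\calF[\psi_i](z_i)$; hence $\dot z_i=\dot x_i-\dot x_{\sigma(i)}=\mu_{\sigma(i)}-\mu_i$, that is, $\dot z\in-L_{\mathrm r}\bigtimes_i\calF[\psi_i](z_i)$, where $L_{\mathrm r}$ is the graph Laplacian of the ring. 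Since each $\psi_i$ is increasing with $\lim_{|y|\to\infty}|\psi_i(y)|=\infty$ (Assumption~\ref{as:monotone} and $a_{i\sigma(i)}>0$), a ring is strongly connected, and $\sum_i z_i=0$ by telescoping around the cycle, $z(\cdot)$ is a Filippov solution of \eqref{e:nonlinear1_fili} for the $z$-system, living in the invariant subspace $\{z:\mathds{1}_n^{\top}z=0\}$; moreover $z(t)$ is bounded, because the coercive Lyapunov function $w^{\top}\Psi(z)$, $\Psi_i(\zeta):=\int_0^{\zeta}\psi_i(\tau)\,d\tau$, from the proof of Theorem~\ref{th:SC_nonlinear1} is non-increasing along it.

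Theorem~\ref{th:SC_nonlinear1} then gives $z(t)\to\calD_1^{z}:=\{z:\exists a\in\R,\ a\mathds{1}_n\in\bigtimes_i\calF[\psi_i](z_i)\}=\{z:\bigcap_i\calF[\psi_i](z_i)\neq\emptyset\}$. The central step is to identify, inside $\{\mathds{1}_n^{\top}z=0\}$, this set with the box $Z_0:=\{z:0\in\calF[\psi_i](z_i)\ \forall i\}$, which equals $\{z:0\in\calF[g_{i\sigma(i)}](z_i)\ \forall i\}$ since $a_{i\sigma(i)}>0$; and $\calH_2$ is precisely the set of $x$ whose difference vector $z$ lies in $Z_0$. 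One inclusion is immediate. Conversely, let $z\in\calD_1^{z}$ with $\mathds{1}_n^{\top}z=0$ and pick $c\in\bigcap_i\calF[\psi_i](z_i)$. If $z=0$ then $z\in Z_0$, because $0\in\calF[\psi_i](0)$ for every $i$ — here one uses $\psi_i(0)=0$, true in Case~2 by hypothesis and in Case~1 because odd functions vanish at $0$. If $z\neq0$, then $\mathds{1}_n^{\top}z=0$ forces $z_p>0$ and $z_q<0$ for some $p,q$; monotonicity together with $\psi_p(0)=0$ gives $c\ge\psi_p(z_p^-)\ge\psi_p(0^+)\ge0$, and similarly $c\le\psi_q(z_q^+)\le\psi_q(0^-)\le0$, so $c=0$ and $z\in Z_0$. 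Hence the $\omega$-limit set of $z(t)$, contained in the closed sets $\calD_1^{z}$ and $\{\mathds{1}_n^{\top}z=0\}$, lies in $Z_0$, so $z(t)\to Z_0$.

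Since $\calH_2=\{x:z\in Z_0\}$ and the linear difference map $x\mapsto z$, restricted to $\{\mathds{1}_n\}^{\perp}$, is an isomorphism onto $\{\mathds{1}_n^{\top}z=0\}$, the distance from $x(t)$ to $\calH_2$ is at most a fixed multiple of the distance from $z(t)$ to $Z_0$, hence tends to $0$; this proves the theorem. As in Example~\ref{ex:2node_undi_sliding}, $x(t)$ itself may be unbounded — its $\mathds{1}_n$-component can drift, ``sliding'' along $\calH_2$ — but $z(t)$ stays bounded and convergence is understood as distance to $\calH_2$. For $|\calI|=2$ the $z$-system is one-dimensional and the hypothesis enters more simply: with $z_1=x_1-x_2$ and $g_1,g_2$ odd, $\dot z_1\in-\calF[a_1g_1+a_2g_2](z_1)$ with $a_1g_1+a_2g_2$ increasing and vanishing at $0$, so $\frac{1}{2}z_1^2$ is a Lyapunov function, $|z_1(t)|$ is non-increasing, and it converges to $\{\zeta:0\in\calF[a_1g_1+a_2g_2](\zeta)\}$, which is exactly $\calH_2$ in the $z_1$-variable; oddness is what turns the two-cycle into this scalar inclusion and makes the limit sets match.

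The hard part is the identification of $\calD_1^{z}\cap\{\mathds{1}_n^{\top}z=0\}$ with $Z_0$, and inside it the fully-synchronised configuration $z=0$: there the common value $c$ need not be zero, and, without the pinning hypothesis $\calF[g_{i_0}](0)=\{0\}$ in Case~2, genuine sliding along $\spa\{\mathds{1}_n\}$ occurs — so one must argue $z=0\in Z_0$ directly from $\psi_i(0)=0$ rather than through $c$, and be careful (via closedness of $\calD_1^{z}$ and the $\omega$-limit-set argument) that convergence of $z(t)$ into $\calD_1^{z}$ within the invariant subspace yields convergence to the intersection. A more self-contained route, parallel to Theorem~\ref{th:ST_nonlinear1}, would work directly with $V=\max_i x_i$ and $W=-\min_i x_i$ from Lemma~\ref{regular_Lipschitz_lyapunov}: off the synchronisation manifold some extremal node on the ring has a strictly more extreme successor, which forces $\max\tilde{\calL}_{\calF}V\le0$ (resp.\ for $W$) there, while $\calF[g_{i_0}](0)=\{0\}$ kills the drift on the manifold; LaSalle then gives convergence to the set where $0\in\calF[g_k]((B^{\top}x)_k)$ holds for every edge $k$ at an extremal node, after which an integral-blow-up argument in the spirit of step~(v) of Theorem~\ref{th:ST_nonlinear1} is still needed to pass from extremal-node edges to all edges, i.e.\ to $\calH_2$. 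I expect that last step — and controlling the unbounded sliding trajectories — to be the real obstacle along that route, which the reduction to Theorem~\ref{th:SC_nonlinear1} largely avoids.
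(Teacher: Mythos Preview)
Your core reduction---passing to the difference variables $z=B^{\top}x$, observing that the resulting inclusion has the form \eqref{e:nonlinear1_fili} on the (reversed) ring, invoking Theorem~\ref{th:SC_nonlinear1} to land $z$ in $\calD_1^{z}$, and then using $\mathds{1}_n^{\top}z=0$ together with monotonicity and $g_i(0)=0$ to force the common value $c=0$---is exactly the route the paper takes in the second half of its proof. The identification $\calD_1^{z}\cap\{\mathds{1}_n^{\top}z=0\}=Z_0$ and the passage to $\calH_2$ are the same argument.

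The departure is in the first half. The paper does \emph{not} permit $x(t)$ to be unbounded: before the $z$-transformation it uses the max/min Lyapunov functions $V,W$ of \eqref{e:VandW} and Theorem~7 of \cite{Wei2015} to show $\max\tilde{\calL}_{\calK_4}V\le0$ and $\max\tilde{\calL}_{\calK_4}W\le0$, hence $x(t)\in[\min_i x_i(0),\max_i x_i(0)]^n$ for all $t$. This is precisely where the hypotheses enter: oddness in Case~1 forces $\calF[g(B^{\top}x)](x)\cap\spa\{\mathds{1}_2\}=\{0\}$ on the consensus manifold, and the pinning condition $\calF[g_{i_0}](0)=\{0\}$ in Case~2 fixes one coordinate of every $\nu\in\calF[g(B^{\top}x)](x)$ to zero there---exactly what \cite{Wei2015} needs to rule out sliding along $\spa\{\mathds{1}_n\}$. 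Your argument never uses the pinning hypothesis, and your assertion that ``$x(t)$ itself may be unbounded'' is in fact false under the theorem's assumptions; Remark~\ref{r:ring} makes clear that excluding sliding consensus is part of what the theorem delivers. So while your $z$-argument is correct and matches the paper's, you have established only $d(x(t),\calH_2)\to0$, not asymptotic convergence of $x(t)$ to $\calH_2$ in the sense the paper intends (with boundedness). The ``more self-contained route'' via $V,W$ that you sketch at the end is not an alternative---it is the missing first half of the paper's proof.
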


\begin{proof}
	By the vectorized form \eqref{e:ring_vec}, the Filippov differential inclusion of \eqref{e:nonlinear2} is given as
	\begin{equation}\label{e:ring_diff_inclusion}
	\dot{x}  \in \calF[g(B^\top x)](x) := \calK_4(x).
	\end{equation}
	
	Since $-B^\top$ is the Laplacian matrix of the reversed ring graph which is also a directed ring, then by Theorem 7 in \cite{Wei2015}, we have that the system \eqref{e:ring_diff_inclusion} is asymptotically stable. More precisely, by the fact that $g_i$ is monotone and $g_i(0)=0$, we have \eqref{p:sign-preserving_g_fili} holds. Furthermore, for any $x\in\spa\{\mathds{1}_n \}$, the Filippov set-valued map $\calF[g(B^\top x)](x)$ satisfies that
	\begin{enumerate}
		\item if $|\calI|=2$ and $g_{i}$ is odd for any $e_{i}\in\calE$,
		\begin{equation}
		\calF[g(B^\top x)](x)=\overline{\mathrm{co}}\{ \begin{bmatrix}
		a_1g_1(0^+) \\ a_2g_2(0^-)
		\end{bmatrix}, \begin{bmatrix}
		a_1g_1(0^-) \\ a_2g_2(0^+)
		\end{bmatrix} \}.
		\end{equation}
		By the fact that $g_i$ is odd, the set $\calF[g(B^\top x)](x)\cap\spa\{\mathds{1}_2 \}=[0,0]^\top$. \\
		\item if $|\calI|>3$ and $g_{i}(0)=0, \forall e_{i}\in\calE$ and there exist $e_{i}\in\calE$ such that $\calF[g_{i}](0)=\{0\}$, w.l.o.g., assume $\calF[g_{1}](0)=\{0\}$. For any $x\in\spa\{\mathds{1}_n \}$, we have $\nu_1=0$ for any $\nu\in \calF[g(B^\top x)](x)$.
	\end{enumerate}
	Then using the similar argument as in the proof of Theorem 7 in \cite{Wei2015}, we have that $\max \tilde{\mathcal{L}}_{\calK_4}V(x(t))\leq 0$ and $\max \tilde{\mathcal{L}}_{\calK_4}W(x(t))\leq 0$ where $V$ and $W$ are given as in \eqref{e:VandW}. This implies that the system \eqref{e:ring_diff_inclusion} is asymptotically stable. Notice that in this paper we do not assume the nonlinear functions to be \emph{sign-preserving} as defined in Definition 1 in \cite{Wei2015}, the exact consensus can not be expected. Next we shall show to which set the trajectories converge.
	
	Consider the coordination transformation $z=B^\top x$. By Property \ref{p:calculus for Filippov}, we have that
	\begin{equation}
	\begin{aligned}
	\dot{z} & = B^\top \dot{x} \\
	& \subset B^\top \calF[g(B^\top x)](x) \\
	& \subset B^\top \bigtimes_{i=1}^{n} a_i\calF[g_i](B^\top_{\cdot,i}x)\\
	& = B^\top \bigtimes_{i=1}^{n} a_i\calF[g_i](z_i).
	\end{aligned}
	\end{equation}
	Again since $-B^\top$ is the Laplacian matrix of the reversed ring graph, we have that the differential inclusion of $z$ is the same as \eqref{e:nonlinear1_fili}. Hence, by  Theorem \ref{th:SC_nonlinear1}, the trajectories $z(t)$ converge to
	$\{ z\in\R^n \mid \exists c\in\R \textnormal{ s.t. } c\mathds{1}\in \bigtimes_{i=1}^na_i\calF[g_i](z_i)\}$. Moreover, by the fact that $\mathds{1}^\top z=0$ and \eqref{p:sign-preserving_g_fili}, we have $c=0$. This implies that the trajectories $x(t)$ of \eqref{e:ring_diff_inclusion} converge to $\calH_2$.
\end{proof}	

\begin{remark}\label{r:ring}
	For the condition $1)$ in Theorem \ref{th:measure_ring}, Example \ref{ex:2node_undi_sliding} can be also employed to show the necessity of have odd function $g_i$. In other words, if $\calI=2$ but $g_1=g_2=\varphi$ is not odd, the sliding consensus could happen. For the condition $2)$, Example 16 in \cite{Wei2015}, which consider the case $g_i=\sign, \forall e_i\in\calE$, shows the necessity of existence $e_i\in\calE$ s.t. $\calF[g_i](0)=\{0\}$ to eliminate the sliding consensus.
\end{remark}

\begin{corollary}\label{th:measure_tree}
	Consider the dynamical system \eqref{e:nonlinear2} defined on a directed spanning tree with $g_{ij}=\bar{g}, \forall e_{ij}\in\calE$  satisfying Assumption \ref{as:monotone} and $\bar{g}(0)=0$. Then all the Filippov trajectories asymptotically converge to
	\begin{equation}
	\begin{aligned}
	\calH_3 =& \{ x\in\R^n \mid \exists \alpha\in\calF[\bar{g}](0) \textnormal{ s.t. }  \\
	&\alpha\mathds{1}_{n-1}\in \bigtimes_{i=1}^{n-1} a_i\calF[\bar{g}](B^\top_{\cdot,i}x) \}.
	\end{aligned}
	\end{equation}
\end{corollary}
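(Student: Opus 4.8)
The plan is to exploit the triangular (cascade) structure that \eqref{e:nonlinear2} inherits from a directed spanning tree. Relabel the vertices $v_1,\dots,v_n$ in a topological order in which $v_1$ is the root and the unique parent $v_{p(k)}$ of each non-root vertex $v_k$ has index $p(k)<k$; write $a_k>0$ for the weight of the $k$-th edge $e_k=(v_{p(k+1)},v_{k+1})$. Since the root has no in-neighbour and every other vertex exactly one, \eqref{e:nonlinear2} reads $\dot x_1=0$ and, for $k\ge2$,
\begin{equation*}
\dot x_k\in-\,a_{k-1}\,\calF[\bar g]\big(x_k-x_{p(k)}\big),
\end{equation*}
so the $k$-th equation is a \emph{scalar} Filippov inclusion driven only by the lower-indexed component $x_{p(k)}$. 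As a preliminary I would record that $V$ and $W$ of \eqref{e:VandW} are non-increasing along solutions (at an index attaining the maximum one has $x_k-x_{p(k)}\ge0$, hence by Property~\ref{p:fili_monotone} the matching velocity component is $\le0$; dually for the minimum, and $\dot x_1=0$), exactly as in the proof of Theorem~\ref{th:ST_nonlinear1}; thus every trajectory stays in the box $[\underline x(0),\overline x(0)]^n$ and is bounded.

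The crux is a scalar lemma: if $a>0$, $\bar g$ satisfies Assumption~\ref{as:monotone} with $\bar g(0)=0$, and $c(\cdot)$ is bounded with $c(t)\to\hat c$, then every bounded Filippov solution of $\dot y\in-a\calF[\bar g](y-c(t))$ converges to a point $\hat y$ with $0\in\calF[\bar g](\hat y-\hat c)$. The ingredients are: (a) by monotonicity and $\bar g(0)=0$, $\calF[\bar g](s)\subset\R_{\ge0}$ for $s>0$ and $\subset\R_{\le0}$ for $s<0$, exactly as in \eqref{p:sign-preserving_g_fili}, so writing $Z:=\{s\mid0\in\calF[\bar g](s)\}$ one has $\dot y>0$ only when $y<c(t)+\inf Z$, $\dot y<0$ only when $y>c(t)+\sup Z$, and $0$ is an admissible velocity exactly on $c(t)+Z$; (b) Property~\ref{p:fili_monotone} shows $Z$ is a closed interval containing $0$, and Assumption~\ref{as:monotone} makes it bounded; (c) a trapping argument: once $c$ is within $\epsilon$ of $\hat c$, either $y$ enters the (eventually non-empty) interior of the band $\hat c+Z$ and is frozen there, or $y$ is confined to an $\epsilon$-collar of one endpoint of $\hat c+Z$ on which its velocity has a fixed sign; in either case $y$ converges, necessarily to a point of $\hat c+Z$.

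Running this lemma up the tree by induction on $k$ — the hypothesis at step $k$ being that $x_{p(k)}(t)$ converges, which the lemma then delivers for $x_k$, with $x_1$ constant as the base case — every component $x_k(t)$ converges, to $\hat x_k$ say, with $0\in\calF[\bar g](\hat x_k-\hat x_{p(k)})$ across every edge. It only remains to read this off as membership in $\calH_3$: choosing $\alpha=0$, which lies in $\calF[\bar g](0)=[\bar g(0^-),\bar g(0^+)]$ since $\bar g$ is increasing and $\bar g(0)=0$, the $n-1$ relations $0\in\calF[\bar g](\hat x_k-\hat x_{p(k)})$ are, by the definition of the incidence matrix $B$, exactly $\alpha\mathds{1}_{n-1}\in\bigtimes_{k=1}^{n-1}a_k\calF[\bar g]\big((B_{\cdot,k})^\top\hat x\big)$, i.e.\ $\hat x\in\calH_3$.

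The main work is the scalar lemma: one must control a scalar inclusion with a genuinely time-varying (but converging) input whose $\omega$-limit set would otherwise be the whole interval $\hat c+Z$ rather than a point, and the non-uniqueness of Filippov solutions forces the argument to rely only on the sign structure of $-a\calF[\bar g](\cdot)$, which holds for every admissible velocity; the coercivity in Assumption~\ref{as:monotone} is what rules out escape to infinity. By comparison the cascade bookkeeping and the identification with $\calH_3$ are routine.
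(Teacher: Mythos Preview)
Your approach is correct but genuinely different from the paper's. The paper does not induct up the tree; instead it first invokes Theorem~7(ii) of \cite{Wei2015} to get asymptotic stability, then passes to the edge coordinates $z=[0,B^\top x]^\top$, observes that $\dot z$ is contained in an inclusion of the form $-L\big(\calF[\bar g](0)\times\bigtimes_i a_i\calF[\bar g](z_{i+1})\big)$ which matches \eqref{e:nonlinear1_fili}, and concludes by appealing to Theorem~\ref{th:ST_nonlinear1}. Your route replaces this reduction by a direct cascade argument: a scalar tracking lemma for $\dot y\in-a\,\calF[\bar g](y-c(t))$ with $c(t)\to\hat c$, applied inductively from root to leaves. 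This is more self-contained (no external reference, no coordinate change) and in fact delivers a slightly stronger conclusion---convergence of each trajectory to a \emph{point} of $\calH_3$, not merely to the set---which the paper's reduction does not give.

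Two places in your sketch need tightening. First, in part~(c) of the scalar lemma, ``is frozen there'' is not literally true while $c(t)\neq\hat c$: the freezing region is $c(t)+\inte Z$, which is still moving. A clean repair is to show first that $\limsup_{t\to\infty} y(t)\le\hat c+z^+$ and $\liminf_{t\to\infty} y(t)\ge\hat c+z^-$ (using that, once $y-c\ge z^++\epsilon/2$, one has $\dot y\le -a\,\bar g((z^++\epsilon/2)^-)<0$ uniformly), and then, when $z^-<z^+$, to argue that any oscillation between $\liminf y$ and $\limsup y$ would force $y(t)$ to hit a point strictly inside $\hat c+Z$ at a time when $|c-\hat c|$ is already small enough that $y-c\in\inte Z$, whence $\dot y=0$ thereafter---a contradiction. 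Second, your dichotomy ``confined to an $\epsilon$-collar on which the velocity has a fixed sign'' fails when $Z$ is a singleton, since the collar then has two sides with opposite signs; but in that case the $\limsup/\liminf$ bounds already coincide, so convergence is immediate and no trapping argument is needed.
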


\begin{proof}
	Since the underlying graph is a directed spanning tree with the root being denoted as $v_1$, then by Property \ref{p:calculus for Filippov}, the differential inclusion satisfies that
	\begin{equation}
	\begin{aligned}\label{e:tree_diff_inclusion}
	\begin{bmatrix}
	\dot{x}_1 \\ \dot{x}_2 \\ \vdots \\ \dot{x}_n
	\end{bmatrix}
	& \in \begin{bmatrix}
	0 \\  \calF[\bar{g}(B^\top x)](x)
	\end{bmatrix} \\
	& := \calK_5(x).
	\end{aligned}
	\end{equation}
	
	Since the Laplacian matrix of the tree is given as  $L=[0_n, -B]^\top$, it can be verified by \eqref{e:def_filippovset} that
	\begin{equation}
	\calK_5(x)= \calF[\bar{g}(-Lx)](x(t)).
	\end{equation}
	Then by Theorem 7(ii) in \cite{Wei2015}, we have that the system \eqref{e:tree_diff_inclusion} is asymptotically stable.
	This implies that the system \eqref{e:tree_diff_inclusion} is asymptotically stable. Next we shall show to which set the trajectories converge.

	Consider the new coordination $z=[0, B^Tx]$ which satisfies following differential inclusion
	\begin{equation}
	\begin{aligned}
	\dot{z} & \in
	\begin{bmatrix}
	0 \\ B^\top \big(\{0\} \bigtimes \bigtimes_{i=1}^{n-1} a_i\calF[\bar{g}](B^\top_{\cdot,i}x) \big)
	\end{bmatrix}\\
	& =
	\begin{bmatrix}
	0_n \\ B^\top
	\end{bmatrix}
	\{0\} \bigtimes \bigtimes_{i=1}^{n-1} a_i\calF[\bar{g}](B^\top_{\cdot,i}x) \\
	& \subset
	\begin{bmatrix}
	0_n \\ B^\top
	\end{bmatrix}
	\calF[\bar{g}](0) \bigtimes \bigtimes_{i=1}^{n-1} a_i\calF[\bar{g}](B^\top_{\cdot,i}x).
	\end{aligned}
	\end{equation}
	Note that the last inclusion is implied by $\{0\}\subset \calF[\bar{g}](0)$ which can be seen from the assumption that $\bar{g}(0)=0$ and $\bar{g}$ is monotone. Moreover, the Laplacian satisfies
	\begin{equation}
	-L=\begin{bmatrix}
	0_n \\ B^T
	\end{bmatrix}
	\end{equation}
	
	So far we have
	\begin{equation}
	\dot{z} \subset
    -L \Big(\calF[\bar{g}](0) \bigtimes \bigtimes_{i=1}^{n-1} a_i\calF[\bar{g}](z_{i+1}) \Big)
	\end{equation}
	which is in the same form as \eqref{e:nonlinear1_fili}. Hence by Theorem \ref{th:ST_nonlinear1}, the conclusion holds.
\end{proof}

\begin{remark}
	For general directed graphs, the trajectories will \emph{not} converge to the set given as in Theorem \ref{th:measure_ring} and Corollary \ref{th:measure_tree}. An example is given in the following section.
\end{remark}

\section{Applications for quantized consensus protocol}\label{s:quantize}

In this section, we shall reinterpret the results in the previous section for the quantizations. There are three types of most considered quantizers, namely the symmetric, asymmetric and logarithmic quantizer defined as
\begin{equation}
	\begin{aligned}
	\mathtt{q}^s(z) & = \Big\lfloor \frac{z}{\Delta}+\frac{1}{2} \Big\rfloor \Delta,\\
	\mathtt{q}^a(z) & = \Big\lfloor \frac{z}{\Delta} \Big\rfloor \Delta,\\
	\mathtt{q}^l(z) & = \begin{cases}
	\sign(z)\exp\Big(\mathtt{q}^s\big(\ln(|z|)\big)\Big) & \textrm{ if } z\neq 0,\\
	0 & \textrm{ if } z=0,
	\end{cases}
	\end{aligned}
\end{equation}
respectively.

There are some properties about these quantizers. First, for the symmetric quantizer $\mathtt{q}^s$ we have: {\it (i)}
$|\mathtt{q}^s(z)-z|\leq \frac{\Delta}{2}$; {\it (ii)} $\mathtt{q}^s(z)=-\mathtt{q}^s(-z)$. Second, for the asymmetric quantizer $\mathtt{q}^a$, the following relation holds: $0\leq z-\mathtt{q}^a(z)\leq \Delta$. Finally, for the logarithmic quantizer $\mathtt{q}^l$, it satisfies that: {\it (i)} $\mathtt{q}^l(z)=-\mathtt{q}^l(-z)$; {\it (ii)} $|\mathtt{q}^l(z)-z|<\big(\exp(\frac{\Delta}{2})-1 \big)|z|$.

\subsection{Quantized state measurement}\label{s:imprecise_measurement}

The linear consensus protocol given as
\begin{equation*}
\dot{x}_i(t) = -\sum_{j\in\calN_i} \alpha_{ij}(x_i(t)-x_j(t))
\end{equation*}
is a rather idealized system in the sense that each agent has exact information about itself and its neighbors. A very natural question is that what would happen if the information is imprecise for each agent. Specifically, in this subsection we consider the case that the measurement of states of the agents are quantized. More precisely, we consider the following dynamics for agent $i$
\begin{equation}\label{e:quantize_general}
\dot{x}_i=\sum_{j=1}^n a_{ij} \big(q_j(x_j)-q_i(x_i)\big)
\end{equation}
where $q_i:\R\rightarrow\R, i=1,\ldots,n$ a quantizer. If $x\in\R^n$, we denote with some abuse of notation $q(x)=(q_1(x_1),\ldots,q_n(x_n)^T$. Hence the dynamics \eqref{e:quantize_general} can be written in the vector form as
\begin{equation}\label{e:quantizedconsensus_general_1}
\dot{x} = -Lq(x).
\end{equation}

For the case of directed graphs, we consider the quantizers satisfy that $q_i=\mathtt{q}^s, \forall i\in\calI$ and the system \eqref{e:quantizedconsensus_general_1} can be written as
\begin{equation}\label{e:quantizedconsensus1}
\dot{x} = -Lq^s(x).
\end{equation}
In this case the set $\calD_2$ defined as \eqref{e:D_1} is given as
\begin{equation}
\{x\in\R^n \mid \exists k\in\Z \textnormal{ such that } k\Delta\mathds{1}_n\in\mathcal{F}[q^s](x)\},
\end{equation}
which is equivalent to
\begin{align}
\calD_2:=& \{x\in\R^n \mid \exists k\in\Z \textnormal{ s. t. } \\& (k-\frac{1}{2})\Delta \leq x_i\nonumber \leq (k+\frac{1}{2})\Delta, \forall i \in\calI \}.
\end{align}
It is known that without the precise measurement of the states, exact consensus can not be achieved in principle. Instead, the notation of \emph{practical consensus} will be employed. We say that the state variables of the agents converge to \emph{practical consensus}, if $x(t)\rightarrow\calD_2$ as $t\rightarrow\infty$.

Based on Theorem \ref{th:ST_nonlinear1}, we have the following results which is an extension of the result in Section 3 of \cite{Ceragioli2011}. More precisely, we generalize the result in \cite{Ceragioli2011} about undirected graph to the directed one containing a spanning tree.

\begin{corollary}\label{c:quantized_measure}
    Consider the system \eqref{e:quantizedconsensus1} defined on a directed graph containing a spanning tree, all the Filippov solution converge to
    $\calD_2$ asymptotically.
\end{corollary}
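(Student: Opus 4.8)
The plan is to view \eqref{e:quantizedconsensus1} as a special case of the abstract protocol \eqref{e:nonlinear1} in which every agent carries the \emph{same} nonlinearity $\bar f = \mathtt{q}^s$, and then to invoke Theorem \ref{th:ST_nonlinear1} directly. First I would check that the symmetric quantizer satisfies Assumption \ref{as:monotone}: since the floor map is non-decreasing and $\Delta>0$, the function $z\mapsto\mathtt{q}^s(z)=\lfloor z/\Delta+\tfrac{1}{2}\rfloor\Delta$ is increasing (non-decreasing, which is the sense used throughout the paper, cf.\ Property \ref{p:fili_monotone}), and clearly $|\mathtt{q}^s(z)|\to\infty$ as $|z|\to\infty$. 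Because $q_i=\mathtt{q}^s$ for every $i\in\calI$, the right-hand side $-Lq^s(x)$ of \eqref{e:quantizedconsensus1} has exactly the structure $-Lf(x)$ with $f(x)=[\bar f(x_1),\dots,\bar f(x_n)]^\top$ required by Theorem \ref{th:ST_nonlinear1}, and the Filippov differential inclusion associated to \eqref{e:quantizedconsensus1} coincides with \eqref{e:nonlinear1_fili} for this choice of $\bar f$.

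With the standing hypothesis that the digraph contains a spanning tree, Theorem \ref{th:ST_nonlinear1} then applies verbatim and yields that every Filippov solution of \eqref{e:quantizedconsensus1} converges asymptotically to the set $\calD_2$ of \eqref{e:D_1} specialized to $\bar f=\mathtt{q}^s$, namely $\{x\in\R^n\mid\exists a\in\R\text{ s.t. }a\mathds{1}_n\in\bigtimes_{i=1}^n\calF[\mathtt{q}^s](x_i)\}$. It then remains only to identify this set with the explicit quantization-cell description given above the statement. For this I would use Property \ref{p:fili_monotone}(i) together with Property \ref{p:monotone} to compute $\calF[\mathtt{q}^s](z)=\{\mathtt{q}^s(z)\}$ whenever $z$ is not one of the jump points $(k+\tfrac{1}{2})\Delta$, and $\calF[\mathtt{q}^s]\big((k+\tfrac{1}{2})\Delta\big)=[k\Delta,(k+1)\Delta]$ at the jump points; the existence of a common vector $a\mathds{1}_n$ in the product is equivalent to $\bigcap_{i=1}^n\calF[\mathtt{q}^s](x_i)\neq\emptyset$, and a short case analysis on whether that common value is an integer multiple of $\Delta$ shows this holds precisely when all the $x_i$ lie in one common cell $[(k-\tfrac{1}{2})\Delta,(k+\tfrac{1}{2})\Delta]$, which is exactly $\calD_2$ as restated before the corollary.

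Since the result is essentially a direct specialization of Theorem \ref{th:ST_nonlinear1}, there is no serious obstacle here; the only step that requires a little care is the last set identification, where one must treat the closed/half-open boundaries of the quantization cells and use the right-continuity convention $\mathtt{q}^s\big((k+\tfrac{1}{2})\Delta\big)=(k+1)\Delta$ consistently, so that the boundary points $(k\pm\tfrac{1}{2})\Delta$ are correctly counted as belonging to $\calD_2$.
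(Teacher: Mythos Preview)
Your proposal is correct and matches the paper's approach exactly: the paper presents this corollary without proof, simply noting that it follows from Theorem~\ref{th:ST_nonlinear1} once the quantized system \eqref{e:quantizedconsensus1} is recognized as an instance of \eqref{e:nonlinear1} with $\bar f=\mathtt{q}^s$, and the explicit form of $\calD_2$ is computed just before the corollary is stated. Your additional care in verifying Assumption~\ref{as:monotone} and in spelling out the identification of the abstract $\calD_2$ with the quantization-cell description is helpful elaboration, but the underlying argument is the same.
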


\begin{remark}
	By Proposition 1 in \cite{Ceragioliphdthesis}, the Krasovskii and Filippov solutions of \eqref{e:quantizedconsensus1} are equivalent. Hence Corollary \ref{c:quantized_measure} holds for all Krasovskii solution as well.
\end{remark}

\subsection{Communication quantization}\label{s:communication_quantization}


As analogous to the system \eqref{e:quantize_general}, the other scenario is that the communication is imprecise. In particular, we consider the consensus protocol with communication quantization which is given as
\begin{equation}\label{e:quantize_communication}
\dot{x}_i=\sum_{j=1}^n a_{ij} q(x_j-x_i)
\end{equation}
where $q$ is quantizer.

When we specify the quantizer $q$ to be symmetric quantizer $q^s$, we have the set $\calH_1$ defined as in \eqref{e:H} can be expressed as
\begin{equation}\label{e:Hq}
\calH_1:=\{x\in\R^n \mid -\frac{1}{2}\Delta \leq x_i-x_j \leq \frac{1}{2}\Delta, \forall e_{ij}\in\calE \}.
\end{equation}

Then Theorem \ref{th:measure_undirected}, \ref{th:measure_ring} and Corollary \ref{th:measure_tree} can be rewritten as follows.

\begin{theorem}\label{t:commu_quantize_ditree}
Consider the system \eqref{e:quantize_communication} with symmetric quantizer $q^s$, then all the Filippov solutions asymptotically converge into the set $\calH_1$ if
\begin{enumerate}
	\item $\calG$ is undirected, or
	\item $\calG$ is a directed ring, or a directed spanning tree.
\end{enumerate}
\end{theorem}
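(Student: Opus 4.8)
The plan is to derive Theorem \ref{t:commu_quantize_ditree} as a direct corollary of the general results in Section \ref{s:communication}, observing that the symmetric quantizer $\mathtt{q}^s$ satisfies the hypotheses needed for each of the three topology cases. First I would note that $\mathtt{q}^s$ is an increasing (nondecreasing) step function with $\lim_{|z|\to\infty}|\mathtt{q}^s(z)|=\infty$, so it satisfies Assumption \ref{as:monotone}; moreover property {\it (ii)} listed above for the symmetric quantizer gives $\mathtt{q}^s(-z)=-\mathtt{q}^s(z)$, so it is odd, and in particular $\mathtt{q}^s(0)=0$. These are exactly the structural properties invoked in Theorems \ref{th:measure_undirected} and \ref{th:measure_ring}.

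Next I would handle the three cases in turn. For case {\it (1)}, $\calG$ undirected: since $\mathtt{q}^s$ satisfies Assumption \ref{as:monotone} and is odd, Theorem \ref{th:measure_undirected} applies verbatim with $g_{ij}=\mathtt{q}^s$, giving convergence to $\calH_1$ as defined in \eqref{e:H}; then I would invoke the identity \eqref{e:Hq}, i.e. that $0\in\calF[\mathtt{q}^s](y)$ iff $-\frac12\Delta\le y\le\frac12\Delta$ (a direct computation from $\calF[\mathtt{q}^s](y)=[\mathtt{q}^s(y^-),\mathtt{q}^s(y^+)]$ via Property \ref{p:fili_monotone}), to rewrite $\calH_1$ in the stated edge-wise form. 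For case {\it (2)} with $\calG$ a directed ring: if $|\calI|=2$ the oddness of $\mathtt{q}^s$ lets me apply condition $1)$ of Theorem \ref{th:measure_ring}; if $|\calI|\ge 3$, I need $\mathtt{q}^s(0)=0$ (true) and the existence of an edge with $\calF[\mathtt{q}^s](0)=\{0\}$ — but $\calF[\mathtt{q}^s](0)=[\mathtt{q}^s(0^-),\mathtt{q}^s(0^+)]=[0,0]=\{0\}$ since $0$ is an interior point of the cell $(-\frac12\Delta,\frac12\Delta)$ on which $\mathtt{q}^s\equiv 0$, so condition $2)$ holds for \emph{every} edge; hence Theorem \ref{th:measure_ring} gives convergence to $\calH_2$, which coincides with $\calH_1$ in the ring case and again rewrites via \eqref{e:Hq}. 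For the directed-spanning-tree subcase of {\it (2)}, I would apply Corollary \ref{th:measure_tree} with $\bar g=\mathtt{q}^s$ (which satisfies Assumption \ref{as:monotone} and $\bar g(0)=0$), yielding convergence to $\calH_3$; then, using $\calF[\mathtt{q}^s](0)=\{0\}$, the parameter $\alpha$ in the definition of $\calH_3$ is forced to be $0$, so $\calH_3$ collapses to exactly $\{x : 0\in a_i\calF[\mathtt{q}^s](B^\top_{\cdot,i}x),\ i=1,\dots,n-1\}$, which is $\calH_1$ restricted to the tree's edges, i.e. the set in \eqref{e:Hq}.

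The only genuine content beyond bookkeeping is the set-identity work: verifying $\calF[\mathtt{q}^s](y)=[0,0]$ precisely on $[-\tfrac12\Delta,\tfrac12\Delta]$ (with care at the endpoints $y=\pm\tfrac12\Delta$, where the left/right limits of $\mathtt{q}^s$ straddle $0$, so $0$ still lies in the Filippov interval), and checking that $\calH_1$, $\calH_2$, $\calH_3$ all reduce to the common edge-wise description \eqref{e:Hq} once $\mathtt{q}^s$ is plugged in. I expect the main (very mild) obstacle to be the collapse of $\calH_3$: one must argue that $\alpha\in\calF[\bar g](0)=\{0\}$ leaves no freedom, and then match $a_i\calF[\mathtt{q}^s]$ containing $0$ with the inequality $|B^\top_{\cdot,i}x|\le\tfrac12\Delta$, i.e. $|x_i-x_j|\le\tfrac12\Delta$ for the corresponding edge $e_{ij}$. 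Everything else is an immediate appeal to Theorems \ref{th:measure_undirected}, \ref{th:measure_ring} and Corollary \ref{th:measure_tree}, whose hypotheses $\mathtt{q}^s$ manifestly meets.
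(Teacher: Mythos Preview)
Your proposal is correct and follows essentially the same approach as the paper's own proof, which simply states that the theorem is a direct application of the results in Section~\ref{s:communication} because $q^s$ is odd and continuous at the origin (so $\calF[q^s](0)=\{0\}$). You supply more detail than the paper does---in particular the explicit verification that $\calH_1$, $\calH_2$, $\calH_3$ all reduce to the edge-wise set \eqref{e:Hq} once $\calF[\mathtt{q}^s](0)=\{0\}$ is used---but the underlying argument is identical.
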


\begin{proof}
	This theorem is a direct application of the results in Section \ref{s:measure}, since $q^s$ is odd and continuous at the origin which implies that $\calF[q^s](0)=\{0\}$.
\end{proof}

\begin{remark}
	In Theorem \ref{t:commu_quantize_ditree}, the undirected graph case has already been presented in \cite{Guo2013}. In this theorem, we extend that result to the directed graph. Moreover, in the following example, we show that the extension can \emph{not} be made to more general directed graphs.
\end{remark}

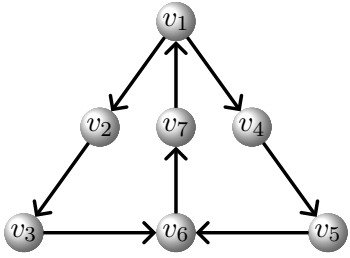
\begin{figure}
	\centering
		\begin{tikzpicture}
		\tikzstyle{EdgeStyle}    = [thin,double= black,
		double distance = 0.5pt]
		\useasboundingbox (0,0) rectangle (6cm,3.5cm);
		\tikzstyle{VertexStyle} = [shading         = ball,
		ball color      = white!100!white,
		minimum size = 20pt,%
		inner sep       = 1pt,]
		
		\Vertex[style={minimum
			size=0.2cm,shape=circle},LabelOut=false,L=\hbox{$v_1$},x=3cm,y=3.4cm]{1}
		\Vertex[style={minimum
			size=0.2cm,shape=circle},LabelOut=false,L=\hbox{$v_2$},x=2cm,y=2.0cm]{2}
		\Vertex[style={minimum
			size=0.2cm,shape=circle},LabelOut=false,L=\hbox{$v_3$},x=1cm,y=0.6cm]{3}
		\Vertex[style={minimum
			size=0.2cm,shape=circle},LabelOut=false,L=\hbox{$v_4$},x=4cm,y=2cm]{4}
		\Vertex[style={minimum
			size=0.2cm,shape=circle},LabelOut=false,L=\hbox{$v_5$},x=5cm,y=0.6cm]{5}						
		\Vertex[style={minimum
			size=0.2cm,shape=circle},LabelOut=false,L=\hbox{$v_6$},x=3cm,y=0.6cm]{6}
		\Vertex[style={minimum
			size=0.2cm,shape=circle},LabelOut=false,L=\hbox{$v_7$},x=3cm,y=2cm]{7}
		\draw
		(1) edge[->,>=angle 90,thin,double= black,double distance = 0.5pt]
		 (2)
		(2) edge[->,>=angle 90,thin,double= black,double distance = 0.5pt]
		 (3)
		(1) edge[->,>=angle 90,thin,double= black,double distance = 0.5pt]
		 (4)
		(4) edge[->,>=angle 90,thin,double= black,double distance = 0.5pt]
		 (5)						
		(3) edge[->,>=angle 90,thin,double= black,double distance = 0.5pt]
		 (6)
		(5) edge[->,>=angle 90,thin,double= black,double distance = 0.5pt]
		 (6)
		(6) edge[->,>=angle 90,thin,double= black,double distance = 0.5pt]
		 (7)
		(7) edge[->,>=angle 90,thin,double= black,double distance = 0.5pt]
		 (1);				
		\end{tikzpicture}
	\caption{Strongly connected digraph used in Examples \ref{ex:general_digraphs}.}
	\label{fig:general_digraph}
\end{figure}

\begin{example}\label{ex:general_digraphs}
	Consider the dynamical system \eqref{e:nonlinear2} defined on a digraph given as in Fig. \ref{fig:general_digraph}. Furthermore we assume the nonlinear function $g_{ij}=q^s$ with quantizer constant $\Delta=1$. Given the initial condition of the state as $x_0=[0, -\frac{1}{3},-\frac{2}{3},\frac{1}{3},\frac{2}{3},0,0]^\top$, then it can be verified that $x(t)=x_0, \forall t>0$ is one Filippov solution. However this solution does \emph{not} belong to the set $\calH_1$ in (\ref{e:Hq}). In fact, $|x_3-x_6|=|x_5-x_6|>\frac{1}{2}\Delta$.
\end{example}

If the quantizer in the system \eqref{e:quantize_communication} is replaced the asymmetric one, i.e., $q^a$, the undesired \emph{sliding consensus} will appear which leads to unboundedness of the trajectories.

\begin{example}
	Consider the dynamical system \eqref{e:quantize_communication} with asymmetric quantizer $q^a$ defined on the graph given as in Fig. \ref{fig:di-2nodesA} and \ref{fig:di-2nodesB}. Since $\calF[q^a](0)=\calF[\varphi](0)$ where $\varphi$ is defined in \eqref{e:varphi}, for any $x\in\spa\{\mathds{1}_2 \}$, the Filippov set-valued map $\calF[q^a(-Lx)](x)=\calF[\varphi(-Lx)](x)$ where $L$ is the Laplacian of the graphs in Fig. \ref{fig:di-2nodes}, and $\calF[\varphi(-Lx)](x)$ is given as \eqref{e:fili_varphi}. Hence, for any $x_0\in\spa \{\mathds{1}_2 \}$, $x(t)=x_0+\frac{1}{2}\mathds{1}t$ is a Filippov solution, i.e., the sliding consensus is a solution.
\end{example}

\section{Conclusion}

In this paper, we considered two general nonlinear consensus protocols, namely the multi-agent systems with nonlinear measurement and communication of their states, respectively. Here we assume the nonlinear functions to be monotonic increasing without any continuity constraints. The solutions of the dynamical systems are understood in the sense of Filippov. For both cases, we proved the asymptotic stability of the systems defined on different topologies. More precisely, in Section \ref{s:measure}, for the case with nonlinear measurement, we considered the systems defined on undirected graphs and directed ones which contain a spanning tree, respectively; in Section \ref{s:communication}, for the case with nonlinear communication, we considered the underlying graph being as undirected, directed ring and directed spanning tree, respectively. Furthermore, we show for the nonlinear communication case, the result can not be extended to general directed graph by examples. Finally, we reinterpret the results in Section \ref{s:measure} and \ref{s:communication} for the quantized consensus protocols, which extend some existing results (e.g., \cite{Ceragioli2011}, \cite{Guo2013}) from undirected graphs to directed ones.


\bibliographystyle{IEEEtran} 
\bibliography{ifacconf}

\begin{thebibliography}{10}
\providecommand{\url}[1]{#1}
\csname url@rmstyle\endcsname
\providecommand{\newblock}{\relax}
\providecommand{\bibinfo}[2]{#2}
\providecommand\BIBentrySTDinterwordspacing{\spaceskip=0pt\relax}
\providecommand\BIBentryALTinterwordstretchfactor{4}
\providecommand\BIBentryALTinterwordspacing{\spaceskip=\fontdimen2\font plus
\BIBentryALTinterwordstretchfactor\fontdimen3\font minus
  \fontdimen4\font\relax}
\providecommand\BIBforeignlanguage[2]{{%
\expandafter\ifx\csname l@#1\endcsname\relax
\typeout{** WARNING: IEEEtran.bst: No hyphenation pattern has been}%
\typeout{** loaded for the language `#1'. Using the pattern for}%
\typeout{** the default language instead.}%
\else
\language=\csname l@#1\endcsname
\fi
#2}}

\bibitem{Ceragioli2011}
F.~Ceragioli, C.~D. Persis, and P.~Frasca, ``Discontinuities and hysteresis in
  quantized average consensus,'' \emph{Automatica}, vol.~47, no.~9, pp. 1916 --
  1928, 2011.

\bibitem{Guo2013}
M.~Guo and D.~V. Dimarogonas, ``Consensus with quantized relative state
  measurements,'' \emph{Automatica}, vol.~49, no.~8, pp. 2531 -- 2537, 2013.

\bibitem{Frasca2012}
P.~Frasca, ``Continuous-time quantized consensus: Convergence of krasovskii
  solutions,'' \emph{Systems \& Control Letters}, vol.~61, no.~2, pp. 273 --
  278, 2012.

\bibitem{Chen2013}
W.~Chen, X.~Li, and L.~Jiao, ``Quantized consensus of second-order
  continuous-time multi-agent systems with a directed topology via sampled
  data,'' \emph{Automatica}, vol.~49, no.~7, pp. 2236 -- 2242, 2013.

\bibitem{Ceragioli2015}
F.~Ceragioli and P.~Frasca, ``Continuous-time consensus dynamics with quantized
  all-to-all communication,'' in \emph{2015 European Control Conference (ECC)},
  July 2015, pp. 1926--1931.

\bibitem{Dimos2010}
D.~V. Dimarogonas and K.~H. Johansson, ``Stability analysis for multi-agent
  systems using the incidence matrix: Quantized communication and formation
  control,'' \emph{Automatica}, vol.~46, no.~4, pp. 695 -- 700, 2010.

\bibitem{Kashyap2007}
A.~Kashyap, T.~Başar, and R.~Srikant, ``Quantized consensus,''
  \emph{Automatica}, vol.~43, no.~7, pp. 1192 -- 1203, 2007.

\bibitem{YiCDC2016}
X.~L. {Yi}, J.~Q. {Wei}, and K.~H. {Johanson}, ``{Self-Triggered Control for
  Multi-Agent Systems with Quantized Communication or Sensing},'' \emph{ArXiv
  e-prints}, Mar. 2016.

\bibitem{Wei2015}
J.~{Wei}, A.~R.~F. {Everts}, M.~K. {Camlibel}, and A.~J. {van der Schaft},
  ``{Consensus problems with arbitrary sign-preserving nonlinearities},''
  \emph{ArXiv e-prints}, Aug. 2015.

\bibitem{Lu2007}
W.~Lu and T.~Chen, ``A new approach to synchronization analysis of linearly
  coupled map lattices,'' \emph{Chinese Annals of Mathematics, Series B},
  vol.~28, no.~2, pp. 149--160, 2007.

\bibitem{cortes2008}
J.~Cortes, ``Discontinuous dynamical systems,'' \emph{Control Systems, IEEE},
  vol.~28, no.~3, pp. 36--73, 2008.

\bibitem{paden1987}
B.~Paden and S.~Sastry, ``A calculus for computing filippov's differential
  inclusion with application to the variable structure control of robot
  manipulators,'' \emph{IEEE Transactions on Circuits and Systems}, vol.~34,
  no.~1, pp. 73--82, 1987.

\bibitem{filippov1988}
A.~Filippov and F.~Arscott, \emph{Differential Equations with Discontinuous
  Righthand Sides: Control Systems}, ser. Mathematics and its
  Applications.\hskip 1em plus 0.5em minus 0.4em\relax Springer, 1988.

\bibitem{Clarke1990optimization}
F.~H. Clarke, \emph{Optimization and Nonsmooth Analysis}, ser. Classics in
  Applied Mathematics.\hskip 1em plus 0.5em minus 0.4em\relax Society for
  Industrial and Applied Mathematics, 1990.

\bibitem{Bacciotti1999}
{A. Bacciotti } and {F. Ceragioli}, ``Stability and stabilization of
  discontinuous systems and nonsmooth lyapunov functions,'' \emph{ESAIM:
  Control, Optimisation and Calculus of Variations}, vol.~4, pp. 361--376,
  1999.

\bibitem{Rantzer2011}
A.~Rantzer, ``Distributed control of positive systems,'' in \emph{2011 50th
  IEEE Conference on Decision and Control and European Control Conference
  (CDC-ECC)}, 2011, pp. 6608--6611.

\bibitem{Ceragioliphdthesis}
F.~Ceragioli, ``Discontinuous ordinary differential equations and
  stabilization,'' Ph.D. dissertation, Universit\`{a} di Firenze, 2000.

\end{thebibliography}

\end{document}